\newtheorem{theorem}{Theorem}[section]
\newtheorem{lemma}[theorem]{Lemma}
\newtheorem{proposition}[theorem]{Proposition}
\newtheorem{corollary}[theorem]{Corollary} 
\theoremstyle{definition}
\newtheorem{definition}[theorem]{Definition}
\newtheorem{example}[theorem]{Example}
\theoremstyle{remark}
\newtheorem{remark}[theorem]{Remark}
\newtheorem{problem}[theorem]{Problem}
\numberwithin{equation}{section}
\begin{document}

\title{Combinatorial constructions of three-dimensional Small Covers}

\author{Yasuzo NISHIMURA}
\address{Department of Mathematics,
Faculty of Education and Regional Studies, University of Fukui,
3-9-1 Bunkyo, Fukui 910-8507, Japan}
\email{y-nishi@u-fukui.ac.jp}

\subjclass[2000]{Primary 57M50, 57M60, 57S17; Secondary 52B10}

\date{March 24, 2011.}

\keywords{small cover, equivariant surgery, connected sum, $3$-polytope}

\begin{abstract}
In this paper we study two operations on $3$-dimensional
small covers called a connected sum and a surgery.
These operations correspond to combinatorial operations on 
$(\mathbb{Z}_2)^3$-colored simple convex polytopes.
Then we show that each $3$-dimensional small cover can be
constructed from $T^3$, $\mathbb{R}P^3$ and $S^1 \times \mathbb{R}P^2$ with
two different $(\mathbb{Z}_2)^3$-actions by using these operations.
This result is a generalization or an improvement of results in \cite{I}, 
\cite{K}, \cite{LY} and \cite{N}.
\end{abstract}

\maketitle


\section{Introduction}
A small cover was introduced by Davis and Januszkiewicz \cite{DJ}
as an $n$-dimensional closed manifold $M^n$ with a locally standard
$(\mathbb{Z}_2)^n$-action such that its orbit space is a simple convex 
polytope $P$ where $\mathbb{Z}_2$ is the quotient additive group
$\mathbb{Z}/2 \mathbb{Z}$.
They showed that there exists a one-to-one correspondence
between small covers and $(\mathbb{Z}_2)^n$-colored polytopes 
(cf. \cite[Proposition 1.8]{DJ}).
Here a pair $(P, \lambda)$ is called a 
{\it $(\mathbb{Z}_2)^n$-colored polytope} when $P$ is an $n$-dimensional 
simple convex polytope with the set of facets $\mathcal{F}$
and a function $\lambda : \mathcal{F} \to (\mathbb{Z}_2)^n$ 
satisfying the following condition:

\vspace{10pt}
$(\star)$ if $F_1 \cap \cdots \cap F_n \neq \emptyset$ then
$\{ \lambda(F_1), \cdots, \lambda(F_n) \}$ is linearly independent.
\vspace{10pt}

We say that two $(\mathbb{Z}_2)^n$-colored polytopes $(P_i, \lambda_i)~(i=1,2)$
are {\it equivalent} when there exists a combinatorial equivalence 
of polytopes $\phi:P_1 \to P_2$ such that $\lambda_2 \phi=\theta \lambda_1$ 
for some $\theta \in {\rm Aut}(\mathbb{Z}_2)^n$.
The $n$-dimensional torus $T^n$ and the real projective space
$\mathbb{R}P^n$ with standard $(\mathbb{Z}_2)^n$-actions are
examples of small covers over the $n$-cube $I^n$ and the $n$-simplex 
$\Delta^n$ respectively.

In this paper we are interested in constructions of $3$-dimensional 
small covers $M^3$ from basic small covers by using some operations.
In \cite{I} Izmestiev studied a class of $3$-dimensional small covers
which are called {\it linear models} and are correspondent 
to $3$-colored polytopes.
He introduced two operations on linear models called a
{\it connected sum} $\sharp$ and a {\it surgery} $\natural$ 
and proved the following theorem (cf. \cite[Theorem 3]{I}).

\begin{theorem}[Izmestiev]
Each linear model $M^3$ can be constructed from 
$T^3$ by using three operations
$\sharp$, $\natural$ and $\natural^{-1}$ where $\natural^{-1}$ 
is the inverse of $\natural$.
\end{theorem}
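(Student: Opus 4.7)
The plan is to proceed by induction on the number $m$ of facets of the simple $3$-polytope $P$ underlying the linear model $M^3$. By the Davis--Januszkiewicz correspondence it suffices to work combinatorially with $(\mathbb{Z}_2)^3$-colored polytopes $(P,\lambda)$ whose image lies in three fixed generators of $(\mathbb{Z}_2)^3$ (this is the defining property of a linear model). The base case is $P = I^3$: every facet of $I^3$ is a quadrilateral, and condition $(\star)$ together with linearity forces $\lambda$ to be essentially unique up to $\mathrm{Aut}(\mathbb{Z}_2)^3$, so the associated small cover is $T^3$.

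For the inductive step I would first interpret $\sharp$ and $\natural$ as combinatorial moves on $(P,\lambda)$. The connected sum at fixed points corresponds to vertex-truncating each of $P_1, P_2$ and gluing along the resulting triangular facets with matching colorings, so its inverse $\sharp^{-1}$ collapses a triangular facet whose three neighboring facets form a prismatic $3$-belt. The surgery $\natural$ is a local modification along a $4$-belt of facets; $\natural^{-1}$ reverses it. Both moves must preserve $3$-colorability and the condition $(\star)$.

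The key combinatorial claim is then that every linear $(P,\lambda)$ with $m > 6$ admits a reducible $3$- or $4$-belt, i.e.\ a configuration on which $\sharp^{-1}$, $\natural$, or $\natural^{-1}$ can be performed with the induced coloring still linear. Its existence follows from the parity constraint (a simple $3$-polytope has facet $3$-colorable iff every facet is an even polygon) together with Euler's relation $V-E+F = 2$ and the simplicity identity $3V = 2E$, which together force quadrilateral facets in every non-cube example; a local analysis around one such quadrilateral produces either the prismatic $3$-belt on which to collapse or the $4$-belt on which to perform $\natural^{\pm 1}$.

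The main obstacle is the final linearity check: after performing $\sharp^{-1}$ or $\natural^{\pm 1}$, the induced coloring on the smaller polytope(s) must still use only three colors and satisfy $(\star)$, which may require composing with an automorphism of $(\mathbb{Z}_2)^3$ or selecting among several candidate belts. Showing that some reducible belt always exists of a type compatible with the coloring---so that the induction never stalls---is the heart of the argument and explains why both $\natural$ and its inverse $\natural^{-1}$ are needed rather than $\natural^{-1}$ alone: the two directions of surgery handle the two possible local colored configurations at a $4$-belt.
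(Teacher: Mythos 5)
Your high-level strategy---induct on the facet count, locate a quadrilateral via Euler's relation and the even-face parity constraint, and reduce---is the right one, and is essentially the route the paper takes to prove the sharper Corollary~\ref{CorN2}(1). The problem is that the crucial step is asserted rather than proved. You acknowledge that ``showing that some reducible belt always exists of a type compatible with the coloring\dots is the heart of the argument,'' but you do not supply it, and it is genuinely nontrivial: around a $2$-independent quadrilateral $F$, whether one can decompose directly or must first prepare the configuration with $\natural^{-1}$ depends on how many of the four faces adjacent to $F$ are themselves quadrilaterals, and on whether the surgery destroys $3$-connectedness of the $1$-skeleton (exactly what Lemma~\ref{L2} addresses). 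Proposition~\ref{Pro:2-ind} supplies this case analysis---a direct splitting $P = P' \sharp I^3$ when two adjacent neighbors of $F$ are quadrilaterals, one application of $\natural^{-1}$ when only one neighbor is, two when none is---and that is precisely what your sketch is missing. Without it the induction may stall.

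Two smaller misreadings are also worth correcting. First, $\sharp^{-1}$ does not ``collapse a triangular facet'': a $3$-colored polytope has no triangular facets at all, since every facet is an even polygon. The inverse connected sum is recognized by finding three pairwise non-adjacent edges whose removal disconnects the $1$-skeleton, cutting $P$ along them, and capping each half with a triangle; a triangular facet of $P$ itself occurs only in the degenerate case $P = P' \sharp \Delta^3$, which cannot arise here. Second, $\natural^{-1}$ increases the number of facets, so it cannot on its own serve as a reduction step in your induction; its role in Proposition~\ref{Pro:2-ind} is preparatory, with $\sharp^{-1} \circ \natural^{-1}$ realizing $(\sharp^e)^{-1}$ and giving a net decrease. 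Your closing claim that both $\natural$ and $\natural^{-1}$ are needed because they ``handle the two possible local colored configurations'' at an edge is also not the right explanation: in a $3$-colored polytope every edge has the same local coloring type, and Corollary~\ref{CorN2}(1) shows $\natural^{-1}$ can in fact be dispensed with entirely. Izmestiev's own proof uses $\natural^{-1}$ for a different reason, namely that it reduces to the prisms $P^3(2l)$ rather than aiming directly at $I^3$.
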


In \cite{N} we generalized Theorem 1.1 to orientable small covers $M^3$
which are correspondent to $4$-colored polytopes.
We introduced a new operation called the Dehn surgery $\natural^D$,
and showed that each {\it orientable} small cover $M^3$ can be
constructed from $T^3$ and $\mathbb{R}P^3$ by using four
operations $\sharp$, $\natural$, $\natural^{-1}$ and $\natural^D$ 
(cf. \cite[Theorem 1.10]{N}).
Later L\"{u} and Yu \cite{LY} considered a construction of general small covers
$M^3$.
They introduced new operations $\sharp^e$, $\sharp^{eve}$, $\sharp^\Delta$ and
$\sharp^\copyright_i$ $(i \ge 3)$ and showed the following theorem 
(cf. \cite[Theorem 1.2]{LY}).

\begin{theorem}[L\"{u} and Yu]
Each small cover $M^3$ can be constructed from $\mathbb{R}P^3$ and 
$S^1 \times \mathbb{R}P^2$ with a certain $(\mathbb{Z}_2)^3$-action
by using seven operations $\sharp$, $\natural^{-1}$, $\sharp^e$, 
$\sharp^{eve}$, $\sharp^\Delta$, $\sharp^\copyright_4$ and
$\sharp^\copyright_5$.
\end{theorem}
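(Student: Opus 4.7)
The plan is to use the Davis--Januszkiewicz correspondence to rephrase the theorem as a combinatorial claim about $(\mathbb{Z}_2)^3$-colored simple convex $3$-polytopes $(P,\lambda)$: every such $(P,\lambda)$ can be reduced, via the reverse directions of the seven listed operations, to one of two atomic colored polytopes --- the tetrahedron $\Delta^3$ with the coloring giving $\mathbb{R}P^3$, or the triangular prism $\Delta^2 \times I$ with the distinguished coloring producing $S^1 \times \mathbb{R}P^2$. The overall argument is by induction on the number of facets of $P$.

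The first step is to set up a combinatorial dictionary and verify that each of $\sharp$, $\natural^{-1}$, $\sharp^e$, $\sharp^{eve}$, $\sharp^\Delta$, $\sharp^\copyright_4$ and $\sharp^\copyright_5$ corresponds to a local move on colored polytopes whose reverse direction strictly decreases the facet count while preserving the coloring condition $(\star)$. Each operation is triggered by a specific local configuration: $\sharp$ by a separating non-prismatic $3$-belt of facets, $\sharp^\Delta$ by a triangular face, the operations $\natural^{-1}$, $\sharp^e$ and $\sharp^{eve}$ by various colored $4$-belts, and $\sharp^\copyright_k$ by a colored $k$-belt for $k=4,5$. This part is a routine bookkeeping exercise on $\lambda$.

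For the inductive step I would use the fact that every simple $3$-polytope other than $\Delta^3$ must have a face of size at most $5$, an easy consequence of Euler's formula together with simplicity. This gives a natural case split: a triangular face invokes $\sharp^\Delta$; quadrilateral or pentagonal faces produce $4$- or $5$-belts, and a careful inspection of the possible coloring patterns around such a belt shows that at least one of $\natural^{-1}$, $\sharp^e$, $\sharp^{eve}$, $\sharp^\copyright_4$ or $\sharp^\copyright_5$ is applicable. If $P$ admits a global separating $3$-belt that cuts it into two nontrivial pieces, a connected-sum decomposition via $\sharp$ does the job.

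The main obstacle will be the completeness of this case analysis: showing that if none of the seven operations applies to $(P,\lambda)$, then $(P,\lambda)$ must be one of the two base cases. This requires checking that $\Delta^3$ admits, up to equivalence, only the coloring of $\mathbb{R}P^3$; that the triangular prism admits exactly one coloring that resists all seven reductions, namely the one yielding $S^1 \times \mathbb{R}P^2$; and that no colored polytope with more facets can be simultaneously irreducible under all seven operations. The last point is the most delicate, and is presumably where the restriction to $\sharp^\copyright_4$ and $\sharp^\copyright_5$ (as opposed to a full family $\sharp^\copyright_i$ for all $i \ge 3$) becomes essential, requiring one to show that every $k$-belt with $k \ge 6$ can be pre-processed into a $4$- or $5$-belt by combining the other six operations.
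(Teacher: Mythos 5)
Your outline captures the right overall shape: pass to $(\mathbb{Z}_2)^3$-colored simple $3$-polytopes, induct on the number of facets, and use the small-face theorem (every simple $3$-polytope has a face with at most five edges) to find a local configuration that one of the operations, run in reverse, can compress. However, you treat the crucial step as ``routine bookkeeping'' and ``careful inspection of the possible coloring patterns,'' and that is exactly where the real difficulty---and the gap in L\"u and Yu's original proof---lives. The issue is not finding an operation whose local pattern matches the small face; it is that the reverse operations (the surgery $\natural$, and the decompositions $\sharp^{-1}$, $(\sharp^e)^{-1}$) are only permitted when the result is still a simple convex $3$-polytope, i.e.\ when the $1$-skeleton remains a $3$-connected $3$-valent simple planar graph. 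When a surgery would destroy $3$-connectedness it is disallowed, and one must instead show that $(P,\lambda)$ is (quasi-)decomposable by some other route. The paper addresses precisely this in Lemmas \ref{L2} and \ref{L3}, and in Remark \ref{gap} it exhibits a counterexample (Figure \ref{counterEX.bmp}) to L\"u--Yu's \cite[Proposition 2.5]{LY}, which was their attempt to handle this obstruction; the paper's Lemma \ref{L3} is the repair. Your proposal never mentions the $3$-connectedness obstruction at all, so as written the inductive step would break in exactly the cases that caused the original gap.

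A few smaller points. The small-face bound holds for every simple $3$-polytope, including $\Delta^3$, so the ``other than $\Delta^3$'' qualifier is unnecessary. Both $\lambda_1$ and $\lambda_2$ on the prism $P^3(3)$ give manifolds homeomorphic to $S^1\times\mathbb{R}P^2$ (Example \ref{Ex:prism}); only $(P^3(3),\lambda_2)$ is the base in the L\"u--Yu theorem, while $(P^3(3),\lambda_1)$ must be produced from the bases, and likewise the $3$-colored cube $(I^3,\lambda_0)$ must be built up (e.g.\ via $\sharp^\copyright_4$) rather than taken as a base---your sketch does not account for this. Finally, the closing remark about ``pre-processing $k$-belts with $k\ge 6$ into $4$- or $5$-belts'' is off target: because every simple $3$-polytope has a face of size $\le 5$, the induction is on faces, not belts of arbitrary length, and the role of $\sharp^\copyright_4$ and $\sharp^\copyright_5$ is to alter the coloring on an isolated $2$-independent small face so that a facet-decreasing surgery becomes available, not to shorten long belts.
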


Operations appeared in Theorem 1.2 are all ``non-decreasing'' 
i.e. they do not decrease the number of faces of an orbit polytope, 
and therefore the use of the surgery $\natural$ 
is prohibited unlike Theorem 1.1.
In \cite{K} Kuroki pointed out that the operations
$\natural^D$, $\sharp^e$ and $\sharp^{eve}$ can be obtained 
as compositions of $\sharp$ and $\natural$
such as $\natural^D = \natural \circ \sharp \mathbb{R}P^3$, 
$\sharp^e= \natural \circ \sharp$ and $\sharp^{eve}=\natural^2 \circ \sharp$,
respectively (cf. \cite[Theorem 4.1]{K}).
Therefore our result in \cite{N} can be improved as follows:
Each orientable small cover $M^3$ can be constructed from $\mathbb{R}P^3$ and
$T^3$ by using three operations $\sharp$, $\natural$ and $\natural^{-1}$.
(cf. \cite[Corollary 4.4]{K}).
Moreover L\"{u}-Yu's result can be rewritten by using $\natural$ instead of 
$\sharp^e$ and $\sharp^{eve}$ as follows (cf. \cite[Corollary 4.8]{K}):
Each small cover $M^3$ can be constructed from $\mathbb{R}P^3$ and
$S^1 \times \mathbb{R}P^2$ with a certain $(\mathbb{Z}_2)^3$-action 
by using six operations $\sharp$, $\natural$, $\natural^{-1}$, 
$\sharp^\Delta$, $\sharp^\copyright_4$ and $\sharp^\copyright_5$.
Then a problem arises (cf. \cite[Problem 5.2]{K}).

\begin{problem}
What are basic small covers from which we can construct all $3$-dimensional 
small covers using the three operations $\sharp$, $\natural$ and 
$\natural^{-1}$ ? 
\end{problem}

We give a solution to this problem.
The following is our main result.

\begin{theorem}\label{ThN2}
Each small cover $M^3$ can be constructed from $T^3$, $\mathbb{R}P^3$ and
$S^1 \times \mathbb{R}P^2$ with two different $(\mathbb{Z}_2)^3$-actions
by using two operations $\sharp$ and $\natural$.
\end{theorem}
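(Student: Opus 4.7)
The plan is to reduce Theorem \ref{ThN2} to the Kuroki-reformulated version of L\"u--Yu's theorem, in which every $3$-dimensional small cover is built from $\mathbb{R}P^3$ and $S^1 \times \mathbb{R}P^2$ (with one specific action) using the six operations $\sharp$, $\natural$, $\natural^{-1}$, $\sharp^\Delta$, $\sharp^\copyright_4$, and $\sharp^\copyright_5$. My task is therefore to eliminate the four auxiliary operations $\natural^{-1}$, $\sharp^\Delta$, $\sharp^\copyright_4$, $\sharp^\copyright_5$, each at the cost of either a rewriting into $\sharp$ and $\natural$ alone or the introduction of one of the new basic pieces listed in Theorem \ref{ThN2}.

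First I would compute, for each auxiliary operation, the $(\mathbb{Z}_2)^3$-colored polytope (facet structure, adjacencies, $\lambda$-labels) of the ``local model'' that gets spliced into $M^3$. For $\sharp^\Delta$ this model is the colored simplex $(\Delta^3,\lambda_{\mathbb{R}P^3})$, so by inspection $\sharp^\Delta(M)=M\sharp \mathbb{R}P^3$ and no new basic piece is required. For $\sharp^\copyright_4$ and $\sharp^\copyright_5$ the local model is a colored $3$-prism; I would identify these prisms as the orbit polytopes of two distinct $(\mathbb{Z}_2)^3$-actions on $S^1\times\mathbb{R}P^2$, and verify that each $\sharp^\copyright_i$ is realized either as a plain connected sum with the appropriate colored prism or as $\natural\circ\sharp$ with it. This is where the \emph{second} action on $S^1\times\mathbb{R}P^2$ promised by Theorem \ref{ThN2} enters; the key point to check is that the coloring conditions defining $\sharp^\copyright_4$ and $\sharp^\copyright_5$ match up exactly with the colorings of the two prism actions, so that nothing beyond these two pieces is needed.

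The main obstacle is eliminating the inverse surgery $\natural^{-1}$. The key lemma, to be verified combinatorially on $(\mathbb{Z}_2)^3$-colored polytopes, will take the shape
\[
\natural^{-1}(M)\;=\;\natural^{k}\bigl(M\,\sharp\, T^3\bigr)
\]
for a suitable $k\ge 1$ and a connected sum taken at the combinatorial location at which $\natural^{-1}$ is to be applied. The intuition is that gluing $I^3$ with its standard cubical coloring at a vertex (or along an edge, if the connected sum is taken there) injects exactly the extra facets, with the correct labels, that $\natural^{-1}$ would produce, together with a controlled number of auxiliary facets that can be removed by subsequent applications of $\natural$. Establishing this identity for every local label configuration in which $\natural^{-1}$ can be applied is the technical heart of the proof; what makes it work is that the standard coloring of $T^3$ is ``universal'' in the sense that its three facet labels at any vertex form a basis of $(\mathbb{Z}_2)^3$, so the labels produced by $\natural^{-1}$ can always be matched up after a suitable change of basis in $\mathrm{Aut}(\mathbb{Z}_2)^3$.

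Combining the three reductions rewrites Kuroki's six-operation construction as a $\{\sharp,\natural\}$-construction starting from $\{T^3,\mathbb{R}P^3,S^1\times\mathbb{R}P^2\text{ with two actions}\}$, which is precisely the statement of Theorem \ref{ThN2}. I expect the $\sharp^\Delta$ and $\sharp^\copyright_i$ cases to be essentially bookkeeping once the right colorings are written down, with the $\natural^{-1}$ case requiring a careful enumeration of local coloring patterns.
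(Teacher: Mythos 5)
Your plan takes a genuinely different route from the paper, but several of its load-bearing steps are incorrect or unsupported. The paper does not reduce to L\"u--Yu at all; it runs a direct induction on the number of facets, using the fact that every $3$-polytope has a ``small face'' (fewer than six edges) and showing in Propositions \ref{Pro:3-ind} and \ref{Pro:2-ind} that any such face can be compressed by a $\sharp$- or $\sharp^e$-decomposition, possibly after a $\natural^D$ or $\natural^{-1}$ surgery whose prohibition would itself force a decomposition (Lemma \ref{L2}). In particular, the paper never needs to simulate the L\"u--Yu operations $\sharp^\Delta$, $\sharp^{\copyright}_4$, $\sharp^{\copyright}_5$; it avoids them by a completely different induction.

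The concrete gaps in your proposal are the following. First, the identification $\sharp^\Delta(M)=M\sharp\mathbb{R}P^3$ is false: $\sharp\mathbb{R}P^3=\sharp\Delta^3$ is a connected sum at a vertex (a blow up, which increases the number of facets), whereas $\sharp^\Delta$ is a connected sum along a triangular face, a different operation with different effect on the face count. Second, $\sharp^{\copyright}_4$ and $\sharp^{\copyright}_5$ are connected sums along a quadrilateral (resp.\ pentagonal) face with a coloring of the $4$-gonal prism $I^3=P^3(4)$ (resp.\ $P^3(5)$), \emph{not} with the $3$-prism $P^3(3)$ underlying $S^1\times\mathbb{R}P^2$; moreover they leave the orbit polytope unchanged, so they are pure coloring changes and cannot be written as $\sharp$ or $\sharp^e$ with any fixed basic piece. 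The paper itself, in the non-decreasing setting of Section 5, is forced to keep $\sharp^{\copyright}_4$ as a basic operation precisely for this reason (Theorem \ref{N3}); it is only the availability of the decreasing surgery $\natural$, exploited in Section 4, that allows it to be dropped. Third, the key lemma $\natural^{-1}(M)=\natural^{k}(M\sharp T^3)$ is only asserted. Face counting forces $k=2$, but $\natural^{-1}$ is applied along a pair of non-adjacent edges $e_1,e_3$ of a face and creates a new edge joining two previously non-adjacent facets $F_1',F_3'$; when $e_1$ and $e_3$ are far apart on a large face, the purely local modification produced by $\sharp T^3$ at a single vertex followed by two surgeries cannot reach across, so the identity cannot hold for every admissible pair of edges. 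Without these three ingredients the reduction from L\"u--Yu's theorem does not close.
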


In the above theorem we do not use the inverse surgery $\natural^{-1}$.
As a corollary we obtain improvements of Theorem 1.1 and
our previous result in \cite{N}.

\begin{corollary}\label{CorN2}
(1) Each linear model $M^3$ can be constructed from $T^3$ 
by using two operations $\sharp$ and $\natural$.\\
(2) Each orientable small cover $M^3$ can be constructed from $T^3$ and
$\mathbb{R}P^3$ by using two operations $\sharp$ and $\natural$.
\end{corollary}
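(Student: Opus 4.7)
The plan is to derive Corollary~\ref{CorN2} as a direct specialization of Theorem~\ref{ThN2} by restricting attention to the relevant subclass of small covers. Recall that a linear model corresponds to a $3$-colored polytope (equivalently, an orbit polytope in which every $2$-face has an even number of edges), while an orientable small cover $M^3$ corresponds to a $4$-colored polytope. Among the four basic building blocks appearing in Theorem~\ref{ThN2}, only $T^3$ over $I^3$ is a linear model, and only $T^3$ and $\mathbb{R}P^3$ are orientable; both $(\mathbb{Z}_2)^3$-colored versions of $S^1 \times \mathbb{R}P^2$ are non-orientable since $\mathbb{R}P^2$ is non-orientable, and neither corresponds to a $3$-colored polytope.

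The key step is to verify that the two operations $\sharp$ and $\natural$ respect each property in the strong two-sided sense that the output lies in the subclass if and only if every input does. The forward direction, that $\sharp$ and $\natural$ preserve being a linear model (respectively, being orientable), follows from a direct combinatorial check on the induced coloring of the output polytope. The reverse direction is more delicate but equally essential: if the output of $\sharp$ is a linear model, then each summand must already be $3$-colorable, because the $2$-faces of each summand survive, up to a prescribed local modification near the glued vertex, as $2$-faces of the combined polytope, and any odd-length face in a summand would force an odd-length face in the output. A similar local analysis applies to $\natural$, and the analogous statement for $4$-colorability handles the orientable case.

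With these closure properties established, the corollary follows by induction on the length of the construction provided by Theorem~\ref{ThN2}. Given a linear model $M^3$, Theorem~\ref{ThN2} yields a finite expression of $M^3$ as a sequence of $\sharp$ and $\natural$ operations starting from the basic small covers. Walking backward from $M^3$, at each step the inputs to the operation producing the current small cover must themselves be linear models by the reverse closure property, and hence at the leaves of the construction tree every basic piece is a linear model. Since only $T^3$ qualifies among the basic pieces, the construction uses only $T^3$, proving part (1). The identical argument with ``linear model'' replaced by ``orientable'' and the basic-piece list reduced to $\{T^3, \mathbb{R}P^3\}$ yields part (2).

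The main obstacle I anticipate is verifying the reverse closure for the surgery $\natural$. While for the connected sum each summand appears essentially as a sub-polytope of the result and local preservation of $3$- or $4$-colorability is nearly transparent, the surgery $\natural$ performs a more substantial local modification; one must check carefully that a failure of the coloring condition in the input to $\natural$ cannot be ``healed'' by the surgery, which amounts to tracking the parities of edge-counts of the affected $2$-faces through the explicit combinatorial definition of $\natural$ given earlier in the paper.
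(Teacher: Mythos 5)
Your overall strategy — derive the corollary from Theorem~\ref{ThN2} by identifying which basic pieces lie in each subclass and then showing that the operations $\sharp$ and $\natural$ are ``reverse-closed'' on the subclass — is exactly the route the paper takes (the paper's phrasing is simply ``we restrict the above theorem to $3$- (resp.\ $4$-)colored polytopes''). Your identification of $T^3$ (over $(I^3,\lambda_0)$) as the only $3$-colored basic piece and of $T^3,\mathbb{R}P^3$ as the only orientably colored basic pieces is correct, and the walk-backward-through-the-construction-tree scheme is sound.

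However, the specific argument you give for the reverse closure of $\sharp$ is flawed. You argue via parity of face lengths: ``any odd-length face in a summand would force an odd-length face in the output.'' This is false for the faces at the glued vertex. In the vertex connected sum, a face $F_1$ of $P_1$ with $k_1$ edges incident to the glued vertex is merged with a face $F_2$ of $P_2$ with $k_2$ edges into a face of the output with $k_1 + k_2 - 2$ edges. If $k_1$ and $k_2$ are both odd, the merged face has even length. A concrete instance is in the paper itself: $\Delta^3 \sharp \Delta^3 = P^3(3)$, where three pairs of triangles (length $3$) merge into three quadrilaterals (length $4$). So the parity-tracking argument does not rule out an odd face in a summand of a $3$-colorable output, and the same issue would infect the analogous parity-tracking you propose for $\natural$.

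The correct argument is shorter and does not involve parity at all; it is the one the paper implicitly relies on. In the colored-polytope picture, $\natural$ leaves $\lambda(\mathcal{F})$ unchanged (the paper states explicitly that ``the surgeries $\natural$ and $\natural^{-1}$ preserve the number of colors of faces''), and $\sharp$ produces a colored polytope whose color set is the union of the color sets of the summands. Since any $(\mathbb{Z}_2)^3$-colored polytope has at least three colors (the three facets at a vertex carry linearly independent colors), an output with exactly three colors forces each input to have exactly three colors, and an output whose colors lie in $\{\alpha,\beta,\gamma,\alpha+\beta+\gamma\}$ for some basis forces the same of each input. So the reverse closure for both $\sharp$ and $\natural$ is immediate from the coloring data, with $\natural$ being the trivial case rather than the delicate one. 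Your conclusion stands, but you should replace the edge-parity reasoning by this color-set argument.
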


These results are equivariant analogues of a well-known result (cf. \cite{Ki}):
``Each closed $3$-manifold can be constructed from the $3$-sphere $S^3$
by using the Dehn surgeries''.

This paper is organized as follows.
In section 2 we recall the definition and the basic facts about small covers
briefly, and we introduce some basic $3$-dimensional small covers.
In section 3 we establish several operations on $(\mathbb{Z}_2)^3$-colored
polytopes.
In section 4 we discuss the constructions of $(\mathbb{Z}_2)^3$-colored
polytopes, and prove Theorem \ref{ThN2}.
In section 5 we follow the standpoint of L\"{u} and Yu, and discuss 
a non-decreasing construction of small covers
by using the inverse surgery $\natural^{-1}$ instead of 
the decreasing surgery $\natural$.
We shall point out that there is a gap in the proof of Theorem 1.2 in \cite{LY}
(Remark \ref{gap}) and improve their result as follows.

\begin{theorem}\label{ThLast}
(1) Each linear model $M^3$ can be constructed from $T^3$
by using three operations $\sharp$, $\sharp^e$ and $\natural^{-1}$.\\
(2) Each orientable small cover $M^3$ can be constructed from 
$T^3$ and $\mathbb{R}P^3$ 
by using three operations $\sharp$, $\sharp^e$ and $\natural^{-1}$.\\
(3) Each small cover $M^3$ can be constructed from 
$\mathbb{R}P^3$ and $S^1 \times \mathbb{R}P^2$ with two different
$(\mathbb{Z}_2)^3$-actions 
by using four operations $\sharp$, $\sharp^e$, $\natural^{-1}$
and $\sharp^\copyright_4$.
\end{theorem}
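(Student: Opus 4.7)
The plan is to derive Theorem \ref{ThLast} from Theorem \ref{ThN2} by converting each construction that uses $\sharp$ and $\natural$ into one that uses only the non-decreasing operations $\sharp$, $\sharp^e$ and $\natural^{-1}$, together with $\sharp^\copyright_4$ as an additional tool for part (3). The key algebraic input, due to Kuroki, is the identity $\sharp^e = \natural \circ \sharp$: performing a connected sum and then a surgery on the newly created edge coincides with an edge connected sum.

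For a given target small cover $M^3$, I would first apply Theorem \ref{ThN2} to obtain a construction sequence starting from the basic small covers listed in each part of the statement and using only $\sharp$ and $\natural$. I would then proceed by induction on the number of $\natural$ operations in this sequence. Whenever a $\natural$ acts on an edge freshly produced by an immediately preceding $\sharp$, Kuroki's identity merges the two into a single $\sharp^e$ and the count drops. When a $\natural$ instead acts on an edge not so produced, I would invoke the local nature of these operations: $\sharp$ and $\natural$ performed on disjoint regions of the polytope commute, so one can slide the $\natural$ backward through the sequence until it abuts a compatible $\sharp$.

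For any $\natural$ that still resists being paired — for instance one acting on a feature already present in a basic small cover — I would detour through $\natural^{-1}$: if $\natural$ sends $N$ to $N'$, then $N' = \natural^{-1}(N'')$ for some polytope $N''$ with more facets than $N'$, and the task reduces to showing that $N''$ itself admits a non-decreasing construction. For part (3) the operation $\sharp^\copyright_4$ takes care of the gluings around quadrilateral faces that the rest cannot realize; this is also where I would close the gap in \cite{LY} flagged in Remark \ref{gap}, by checking that every configuration that the missing case of L\"u--Yu's argument leaves unreached is accessible via $\sharp$, $\sharp^e$, $\natural^{-1}$ and $\sharp^\copyright_4$ from the enlarged list of bases.

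The main obstacle will be the commutation and rearrangement step: one must carefully track which facets and edges each operation creates, and verify that sliding a $\natural$ backward never destroys the edge on which it subsequently acts nor interferes with intermediate operations. An alternative route would redo the inductive argument of Theorem \ref{ThN2} from scratch with the new operation set; in that version the difficulty shifts to the base of the induction, where one has to verify by hand that all $(\mathbb{Z}_2)^3$-colored polytopes with few facets are reachable from the basic list using only $\sharp$, $\sharp^e$, $\natural^{-1}$, and (for part (3)) $\sharp^\copyright_4$.
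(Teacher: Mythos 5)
Your main strategy — converting a $\{\sharp,\natural\}$ construction from Theorem~\ref{ThN2} into a $\{\sharp,\sharp^e,\natural^{-1}\}$ construction by sliding each $\natural$ back until it merges with a $\sharp$ via Kuroki's identity $\sharp^e=\natural\circ\sharp$ — does not go through. The surgeries $\natural$ that appear in the construction of Theorem~\ref{ThN2} come from two sources. Those coming from the expansions $\sharp^e=\natural\circ\sharp$ and $\natural^D=\natural\circ(\sharp\Delta^3)$ already abut a compatible $\sharp$, so they pose no problem. But those coming from inverting the $\natural^{-1}$ used in Proposition~\ref{Pro:2-ind} (the compression of a $2$-independent quadrilateral or pentagon) act on edges that are \emph{not} freshly created by an adjacent connected sum; they are applied to transform the small face before any decomposition happens. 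For these there is no adjacent $\sharp$ to merge with, and commuting them past intermediate operations does not produce one. Your fallback ``detour through $\natural^{-1}$'' also contains a direction error: if $N'=\natural(N)$, then writing $N'=\natural^{-1}(N'')$ forces $N''$ to have \emph{fewer} facets than $N'$, not more; and in any case, exhibiting such an $N''$ that is itself non-decreasingly constructible is exactly the open problem you set out to solve, so nothing is gained.

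The paper's proof is precisely your ``alternative route,'' but carried out rather than deferred. It redoes the reduction argument with the new operation set: Proposition~\ref{I2} (for part (1)) comes from Izmestiev's decomposition into prisms $P^3(2l)$ and the identity $P^3(2l)=I^3\sharp^e\cdots\sharp^e I^3$; Proposition~\ref{N2} (part (2)) composes this with Proposition~\ref{PN1}; and for part (3) (Theorem~\ref{N3}) the compression of $2$-independent pentagons is redone via a \emph{forward} surgery $\natural$ (Figure~\ref{5-gonR.bmp}) so that the reconstruction reads $P=\natural^{-1}(P'\sharp^e P^3(3))$, which only uses $\natural^{-1}$. Making this last step legitimate requires a new technical input, Lemma~\ref{L3}, which shows that when a forward surgery $\natural$ is obstructed the polytope is quasi-decomposable; this is also the lemma that repairs the gap in \cite{LY} (Remark~\ref{gap}). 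Finally, the irreducible configurations (isolated $2$-independent quadrilaterals, e.g.\ the truncated octahedron of Figure~\ref{octahedron.bmp}) are handled by the coloring change $\sharp^\copyright_4$. Your proposal gestures at $\sharp^\copyright_4$ and at ``closing the gap,'' but supplies neither the modified compression of pentagons nor Lemma~\ref{L3}, which are the substantive new ingredients.
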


In section 6 we shall make a remark on a {\it $2$-torus manifold} 
which is an object of a little wider class than small covers.
If the object is expanded to this class, the argument becomes 
easier.
We prove the following theorem.

\begin{theorem}\label{NN2}
(1) Each linear model of a locally standard $2$-torus manifold over $D^3$ 
can be constructed from $S^3$ by using inverse surgery $\natural^{-1}$.\\
(2) Each orientable locally standard $2$-torus manifold over $D^3$ 
can be constructed from $S^3$ by using two surgeries $\natural^{-1}$,
$\natural^D$ and the blow up $\sharp \mathbb{R}P^3$.\\
(3) Each locally standard $2$-torus manifold over $D^3$ 
can be constructed from $S^3$ by using the inverse surgery $\natural^{-1}$
and connecting $\mathbb{R}P^3$, $S^1 \times_{\mathbb{Z}_2} S^2$, 
$S^1 \times \mathbb{R}P^2$ with certain $(\mathbb{Z}_2)^3$-actions 
by operations $\sharp$ and $\sharp^e$.
\end{theorem}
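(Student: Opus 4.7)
The plan is to mirror the inductive arguments behind Theorems~\ref{ThN2} and~\ref{ThLast}, but to exploit the strictly weaker combinatorial constraints enjoyed by the orbit space of a $2$-torus manifold over $D^3$ as opposed to a small cover over a simple convex polytope.

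The common base case for all three parts is the sphere $S^3$: under the $(\mathbb{Z}_2)^3$-action factoring through a single $\mathbb{Z}_2$-factor, its orbit space is $D^3$ equipped with the trivial face structure consisting of a single $2$-face, namely the entire boundary $\partial D^3=S^2$, coloured by some nonzero $v\in(\mathbb{Z}_2)^3$. Every reduction will terminate at this configuration.

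For part~(1), I would proceed by downward induction on the number of edges in the face graph on $\partial D^3=S^2$. Given a linear model with at least one edge, the aim is to locate an edge at which the surgery $\natural$ can be applied, producing another linear model over $D^3$ with strictly fewer edges. The decisive difference from Corollary~\ref{CorN2}(1) is that the face structure on $D^3$ need not come from a simple convex polytope: bigons and even monogons are permissible as $2$-cells of the orbit space of a $2$-torus manifold. Consequently, the combinatorial obstructions that, in the small cover setting, forced the use of the connected-sum $\sharp$ alongside $\natural$ simply do not arise here, and a suitable edge for $\natural$ can always be located. Reversing the direction, $\natural^{-1}$ alone suffices to build any linear model from $S^3$.

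For parts~(2) and~(3), the same inductive scheme is applied, but whenever the reduction meets a local configuration whose removal would require a fourth (or further) colour, one of the supplementary operations is invoked: in (2), the blow-up $\sharp\mathbb{R}P^3$ to introduce an extra independent colour at a vertex, and the Dehn surgery $\natural^D$ to swap the colouring along an edge; in (3), the additional basic models $S^1\times_{\mathbb{Z}_2}S^2$ and $S^1\times\mathbb{R}P^2$ are grafted in via $\sharp$ or $\sharp^e$ to realise any coloring pattern that cannot be produced by edge operations alone. In each case the net effect of the supplementary operation, followed by compatible $\natural$ moves, is to strictly reduce the edge count, so the induction continues to completion at $S^3$.

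The main technical obstacle is showing, at each inductive step, that a legitimate edge for $\natural$ does exist: i.e.\ after its removal the resulting labelled face graph on $S^2$ still represents a locally standard $2$-torus manifold over $D^3$, with the coloring condition satisfied at every remaining edge and vertex. Because bigons and monogons are allowed, the required local case analysis around the chosen edge is substantially lighter than in the small-cover setting of Theorem~\ref{ThN2}, but it must still be carried out carefully at the two endpoints where the chosen edge meets its neighboring edges.
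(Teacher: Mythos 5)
Your high-level plan -- mirror the inductive reduction of the small-cover case but exploit the disappearance of the $3$-connectedness obstruction in the $D^3$ setting -- is the right one and matches the paper's strategy. Part (1) is also essentially fine: the paper likewise disposes of the $3$-colored case by observing that $\natural$ is always applicable when $Q\neq\oslash$. However, for parts (2) and (3) the proposal is too vague to count as a proof; the concrete combinatorics that make the induction close are missing, and the sketched role of the auxiliary operations does not match what actually has to happen.

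For part (2), saying ``blow up to introduce an extra colour, Dehn surgery to swap colouring'' does not describe a reduction. The actual mechanism, taken from Proposition~\ref{PN1}, is: pick a $3$-independent face of minimal size, shrink it one edge at a time with $\natural^D$ (this is always allowed in the $D^3$ category, unlike Proposition~\ref{PN1} where one must worry about Lemma~\ref{L2}) until it becomes a triangle, then remove the triangle by the blow-down $\sharp^{-1}\Delta^3$; iterate until no $3$-independent face remains, reducing to the $3$-colored case of part~(1). Your sketch never identifies where $\natural^D$ should be applied nor why the process terminates. For part (3), the statement ``the additional basic models are grafted in to realise any coloring pattern that cannot be produced by edge operations alone'' is not an argument. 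The crucial fact that the paper uses -- and that you omit entirely -- is that every $2$-independent quadrilateral or pentagon in a cell decomposition has a $3$-colored edge, and in the $D^3$ category $\natural$ can always be performed along such an edge (no $3$-connectedness constraint), strictly decreasing the number of faces. Combined with the handling of $3$-independent small faces and $2$-independent triangles via the blow-downs $\sharp^{-1}\Delta^3$, $\sharp^{-1}P^3(3)$, $(\sharp^e)^{-1}P^3(3)$, this reduces $Q$ to the short list $\Delta^3$, $P^3(3)$, $P^3(2)$, and the relations $\natural(P^3(3),\lambda_1)=(P^3(2),\lambda_1)$, $\natural^D\Delta^3=(P^3(2),\lambda_2)$, $\natural^2(I^3,\lambda_0)=(P^3(2),\lambda_0)$ then winnow the basic models down to $\oslash$, $\Delta^3$, $(P^3(2),\lambda_1)$, $(P^3(3),\lambda_2)$. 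You also need, but do not mention, the observation that $\sharp$, $\sharp^e$ commute with $\natural^{-1}$, $\natural^D$ in this category, so that the reduction sequence can be rewritten in the form required by the theorem. Without these ingredients the induction in your proposal does not visibly terminate at the stated basic models.
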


\section{Basics of small covers}
In this section we recall the definitions and basic facts on small covers 
(see \cite{DJ} for detail).
Let $P$ be an $n$-dimensional simple convex polytope with facets 
(i.e., codimension-one faces) $\mathcal{F}= \{ F_1, \cdots, F_m \}$.
A {\it small cover} $M$ over $P$ is an $n$-dimensional closed manifold
with a locally standard $(\mathbb{Z}_2)^n$-action such that 
its orbit space is $P$.
For a facet $F$ of $P$,
we define $\lambda (F)$ to be the generator of the isotropy subgroup at
$x \in \pi^{-1}({\rm int} F)$ where $\pi :M \to P$ is the orbit projection.
Then a function $\lambda : \mathcal{F} \to (\mathbb{Z}_2)^n$ is called a
{\it characteristic function} of $M$ which satisfies the following condition.
\vspace{10pt}

$(\star)$ if $F_1 \cap \cdots \cap F_n \neq \emptyset$ 
then $\{ \lambda(F_1), \cdots, \lambda(F_n) \}$ is linearly independent.
\vspace{10pt}

Therefore $\lambda$ is a kind of face-coloring of $P$.
Then we call a function satisfying $(\star)$ 
a {\it $(\mathbb{Z}_2)^n$-coloring} of $P$.
We say that two $(\mathbb{Z}_2)^n$-colored polytopes $(P_i, \lambda_i)~(i=1,2)$
are {\it equivalent} when there exists a combinatorial equivalence 
of polytopes $\phi:P_1 \to P_2$ such that $\lambda_2 \phi=\theta \lambda_1$ 
for some $\theta \in {\rm Aut}(\mathbb{Z}_2)^n$.
Conversely, given a simple convex polytope $P$ and a $(\mathbb{Z}_2)^n$-coloring
$\lambda: \mathcal{F} \to (\mathbb{Z}_2)^n$ satisfying $(\star)$, 
we can construct a small cover $M$ 
such that its characteristic function is the given $\lambda$ as follows:
$$M(P, \lambda) := P \times (\mathbb{Z}_2)^n / \sim,$$
where $(x,t) \sim (y, s)$ is defined as $x=y \in P$ and
$s-t$ is contained in the subgroup generated by 
$\lambda(F_1), \cdots, \lambda(F_k)$
such that $x \in {\rm int}(F_1 \cap \cdots \cap F_k)$.
We say that two small covers $M_i$ over $P_i$ $(i=1,2)$ are 
{\it ${\rm GL}(n, \mathbb{Z}_2)$-equivalent} on a combinatorial equivalence 
of polytopes $\phi:P_1 \to P_2$
when there exists a $\theta$-equivariant homeomorphism
$f: M_1 \to M_2$ such that $\pi_2 \circ f = \phi \circ \pi_1$
i.e. $f(g \cdot x)= \theta (g) \cdot f(x) ~(g \in (\mathbb{Z}_2)^n, ~x \in M_1)$
for some $\theta \in {\rm Aut}(\mathbb{Z}_2)^n$.
Moreover we say that two small covers are {\it equivalent} when 
they are {\it ${\rm GL}(n, \mathbb{Z}_2)$-equivalent} on some equivalence 
$\phi:P_1 \to P_2$.
In \cite{LM} this equivalence and a ${\rm GL}(n,\mathbb{Z}_2)$-equivalence
on the identity are called a {\it weakly equivariantly homeomorphism} and a
{\it D-J equivalence}, respectively.
Davis and Januszkiewicz proved that a small cover $M$ over $P$
with a characteristic function $\lambda$ is 
${\rm GL}(n, \mathbb{Z}_2)$-equivalent 
on the identity to $M(P, \lambda)$ when we fix a polytope $P$ 
(cf. \cite[Proposition 1.8]{DJ}).
Therefore we can identify the equivalence class 
of a small cover $M(P, \lambda)$ with the equivalence class of a 
$(\mathbb{Z}_2)^n$-colored polytope $(P, \lambda)$.

\begin{example}\label{ex1}
The real projective space $\mathbb{R}P^n$ and the $n$-dimensional torus $T^n$ 
with standard $(\mathbb{Z}_2)^n$-actions are examples of small covers
over the $n$-simplex $\Delta^n$ and the $n$-cube $I^n$ respectively.
Figure \ref{RP3T3.bmp} shows their characteristic functions on the
polytopes in the case $n=3$, where $\{ \alpha, \beta, \gamma \}$
is a basis of $(\mathbb{Z}_2)^3$.
We notice that a $(\mathbb{Z}_2)^n$-coloring on $\Delta^n$ is unique
up to equivalence.
Therefore we denote the colored simplex by $\Delta^n$ by omitting coloring.

\begin{figure}[htbp]
\begin{center}
\includegraphics[bb=0 0 517 209,width=5cm]{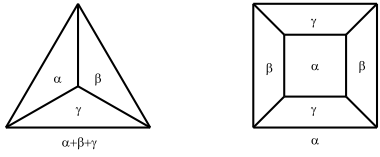}
\end{center}
\caption{Characteristic functions of $\mathbb{R}P^3$ and $T^3$.}
\label{RP3T3.bmp}
\end{figure}

\end{example}

A small cover over $P$ with $n$-coloring (i.e. $\lambda(\mathcal{F})$
is a basis of $(\mathbb{Z}_2)^n$) is called a {\it linear model}.
An example of a linear model is the torus $T^n$ shown in Example \ref{ex1}.
In this case the $n$-coloring of $P$ (i.e. the  linear model) 
is unique up to equivalence.
In case $n=3$, it is well-known that a simple convex polytope 
is $3$-colorable if and only if each face contains an even number of edges.

In \cite[Theorem 1.7]{NN}, we gave a criterion of when a small cover is 
orientable.
We recall the criterion in the case $n=3$.

\begin{theorem}
A $3$-dimensional small cover $M(P, \lambda)$ is orientable if and only if
$\lambda (\mathcal{F})$ is contained in $\{ \alpha, \beta, \gamma, \alpha+\beta+\gamma \}$
for a suitable basis $\{ \alpha, \beta, \gamma \}$ of $(\mathbb{Z}_2)^3$.
\end{theorem}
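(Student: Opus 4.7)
The plan is to apply the general orientability criterion established in \cite[Theorem 1.7]{NN}, which asserts that a small cover $M(P,\lambda)$ of dimension $n$ is orientable if and only if there exists a homomorphism $\epsilon : (\mathbb{Z}_2)^n \to \mathbb{Z}_2$ such that $\epsilon(\lambda(F)) = 1$ for every facet $F \in \mathcal{F}$. With this as a black box, the three-dimensional statement reduces to a short $\mathbb{F}_2$-linear-algebra observation: an affine hyperplane in $(\mathbb{Z}_2)^3$ avoiding the origin is always of the form $\{\alpha,\beta,\gamma,\alpha+\beta+\gamma\}$ for some basis.

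For the \emph{if} direction, suppose $\lambda(\mathcal{F}) \subseteq \{\alpha,\beta,\gamma,\alpha+\beta+\gamma\}$ for some basis $\{\alpha,\beta,\gamma\}$ of $(\mathbb{Z}_2)^3$. Define $\epsilon:(\mathbb{Z}_2)^3 \to \mathbb{Z}_2$ by $\epsilon(\alpha)=\epsilon(\beta)=\epsilon(\gamma)=1$; then $\epsilon$ also sends $\alpha+\beta+\gamma$ to $1+1+1=1$, so $\epsilon \circ \lambda \equiv 1$ and $M(P,\lambda)$ is orientable by the criterion.

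For the \emph{only if} direction, let $\epsilon$ be the homomorphism supplied by the criterion, and consider the four-element affine hyperplane $H := \epsilon^{-1}(1) \subset (\mathbb{Z}_2)^3$, which contains $\lambda(\mathcal{F})$. I would next observe that \emph{any} three distinct elements of $H$ form a basis of $(\mathbb{Z}_2)^3$: if $v_1,v_2,v_3 \in H$ were linearly dependent, the only possible nontrivial relation among three distinct nonzero vectors in $(\mathbb{Z}_2)^3$ would be $v_1+v_2+v_3=0$, which forces the contradiction $0 = \epsilon(0) = 1+1+1 = 1$ in $\mathbb{Z}_2$. Picking any three distinct elements $\alpha,\beta,\gamma \in H$ as a basis, the remaining fourth element of $H$ must equal $\alpha+\beta+\gamma$, since this sum lies in $H$ (as $\epsilon(\alpha+\beta+\gamma)=1$) and $|H|=4$. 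Hence $\lambda(\mathcal{F}) \subseteq H = \{\alpha,\beta,\gamma,\alpha+\beta+\gamma\}$, as required.

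The genuine difficulty has already been dispatched in \cite{NN}; at the level of $n=3$ nothing remains beyond the elementary counting above. The only point requiring a moment of care is verifying that three distinct elements of $H$ are linearly independent, and this is handled by the simple parity argument on $\epsilon$.
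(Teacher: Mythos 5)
The paper offers no proof of this statement at all — it simply cites \cite[Theorem 1.7]{NN} and restates its $n=3$ specialization — so the task is really to verify that the two formulations agree, which your $\mathbb{F}_2$-linear-algebra argument does correctly. In particular the key observation that any three distinct elements of the affine hyperplane $H=\epsilon^{-1}(1)\subset(\mathbb{Z}_2)^3$ must be a basis (the only possible dependence $v_1+v_2+v_3=0$ being killed by the parity of $\epsilon$), and that the fourth element of $H$ is then forced to equal $\alpha+\beta+\gamma$, is exactly what is needed and is soundly justified.
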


From the above theorem the small covers $\mathbb{R}P^3$ and $T^3$ 
given in Figure 1 are both orientable.
We call a $(\mathbb{Z}_2)^3$-coloring satisfying the orientability 
condition in the above theorem {\it an orientable coloring} of $P$.
Since each triple of $\{ \alpha, \beta, \gamma, \alpha+\beta+\gamma \}$ is
linearly independent, the orientable coloring is just an ordinary $4$-coloring.

\begin{example}\label{Ex:prism}
We consider small covers on the $3$-sided prism $P^3(3)=I \times \Delta^2$.
There exist three types of $(\mathbb{Z}_2)^3$-coloring 
on $P^3(3)$ shown in Figure \ref{prism.bmp} up to equivalence.
The first example $M(P^3(3), \lambda_1)$ is non-equivariantly homeomorphic
to $S^1 \times \mathbb{R}P^2$.
The second example $M(P^3(3), \lambda_2)$
is not equivariantly homeomorphic to $M(P^3(3),\lambda_1)$ but non-equivariantly homeomorphic to 
$S^1 \times \mathbb{R}P^2$ (cf. \cite[Lemmas 4.2 and 4.3]{LY}).
The last example $M(P^3(3), \lambda_3)$ is orientable and
homeomorphic to $\mathbb{R}P^3 \sharp \mathbb{R}P^3$
where $\sharp$ is the connected sum (see the following section).

\begin{figure}[htbp]
\begin{center}
\includegraphics[bb=0 0 452 195,width=5cm]{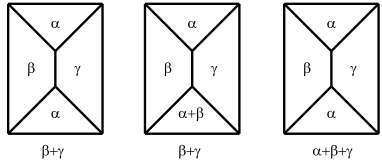}
\end{center}
\caption{Basic three types of $(\mathbb{Z}_2)^3$-coloring on
$3$-sided prism $P^3(3)=I \times \Delta^2$; $\lambda_1, \lambda_2$
and $\lambda_3$ respectively.}
\label{prism.bmp}
\end{figure}

\end{example}

\begin{example}\label{Ex:cube}
It is easily verified that 
there exist four types of $(\mathbb{Z}_2)^3$-coloring on the $3$-cube 
$I^3=P^3(4)$.
One of them is the $3$-colored cube which is already seen in 
Figure \ref{RP3T3.bmp}, and is denoted by $(I^3,\lambda_0)$.
The other three types are shown in Figure \ref{cube.bmp}.
The associated small covers are homeomorphic to 
$S^1 \times K$, a twisted $K$-bundle over $S^1$ and
a twisted $T^2$-bundle over $S^1$ according to $\lambda_1$,
$\lambda_2$ and $\lambda_3$ respectively,
where $K=\mathbb{R}P^2 \sharp \mathbb{R}P^2$ is the Klein's bottle
(more precisely see \cite[Lemmas 5.3 and 5.4]{LY}).

\begin{figure}[htbp]
\begin{center}
\includegraphics[bb=0 0 540 178,width=6cm]{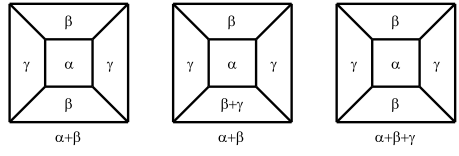}
\end{center}
\caption{Three types of $(\mathbb{Z}_2)^3$-coloring on
$3$-cube $I^3$; $\lambda_1$, $\lambda_2$ 
and $\lambda_3$ respectively
(except the $3$-colored cube in Figure \ref{RP3T3.bmp}).}
\label{cube.bmp}
\end{figure}
\end{example}

\begin{remark}
In \cite{LY} the ${\rm GL}(3, \mathbb{Z}_2)$-equivalence 
on the identity (D-J equivalence) is adopted as an equivalence relation of 
$(\mathbb{Z}_2)^3$-colored polytopes,
i.e. $(P, \lambda) \sim (P, \theta \lambda)$ for $\theta \in
{\rm Aut}(\mathbb{Z}_2)^3$.
Therefore it is written that there exist five (resp. seven) types of 
$(\mathbb{Z}_2)^3$-coloring on $P^3(3)$ (resp. $I^3$) in \cite{LY}.
Discussing D-J equivalence classes only when the orbit polytope $P$ 
is fixed has the meaning.
However, the orbit polytopes will be not fixed in the following sections.
Then we adopt our equivalence (the weakly equivariantly homeomorphism) 
instead of the D-J equivalence.
In this paper we shall rewrite results in \cite{LY}
to our standpoint by our equivalence.
The difference between the D-J equivalence
and our equivalence is not essential in the discussion of the following
sections.
\end{remark}


\section{Operations on small covers}
Henceforth we assume that $n=3$ and $(P, \lambda)$ is a pair of
a $3$-dimensional simple convex polytope $P$ with a $(\mathbb{Z}_2)^3$-coloring
$\lambda$, and $\{ \alpha, \beta, \gamma \}$ is a basis of $(\mathbb{Z}_2)^3$.
We call a $3$-dimensional simple convex polytope a {\it $3$-polytope}
for simplicity.
From the Steinitz's theorem (see  \cite{G} etc.)  combinatorially equivalence 
classes of  $3$-polytopes bijectively correspond to $3$-connected 
$3$-valent simple planner graphs i.e. $1$-skeleton of $P$.
Here a graph $\Gamma$ is called {\it $k$-connected}, {\it $l$-valent}
and {\it simple} if $\Gamma$ is connected after cutting any $(k-1)$ edges,
the degree of each vertex is $l$, and there is no loop and no multi-edge,
respectively.
In this section we recall some operations on $(\mathbb{Z}_2)^3$-colored 
polytopes (or  small covers),
which were introduced in \cite{I}, \cite{LY} and \cite{N}.

\begin{definition}[the connected sum $\sharp$]
The operation $\sharp$ in Figure \ref{connect.bmp} (from left to right) is called the 
{\it connected sum (at vertices)} 
 and its inverse (from right to left) is denoted by $\sharp^{-1}$.
These operations also can be defined for non-colored polytopes.
Remark that $P_1 \sharp P_2$ is also a $3$-polytope 
for any $3$-polytopes $P_i~(i=1,2)$ from the Steinitz's theorem.
The operation $\sharp$ corresponds to the connected sum
$M(P_1, \lambda_1) \sharp M(P_2, \lambda_2)$ around fixed points of them
(cf. \cite[1.11]{DJ} or \cite[Definition 3]{I}).
We say that $(P, \lambda)$ is {\it decomposable} 
(as a $(\mathbb{Z}_2)^3$-colored polytope) when there exist two 
$(\mathbb{Z}_2)^3$-colored polytopes $(P_i, \lambda_i) ~(i=1,2)$ 
such that $(P, \lambda) = (P_1, \lambda_1) \sharp (P_2, \lambda_2)$.
Similarly we say that $P$ is {\it decomposable as a non-colored polytope}
when $P=P_1 \sharp P_2$ as non-colored polytopes for some $P_i ~(i=1,2)$.
\end{definition}

\begin{figure}[htbp]
\begin{center}
\includegraphics[bb=0 0 537 163,width=6cm]{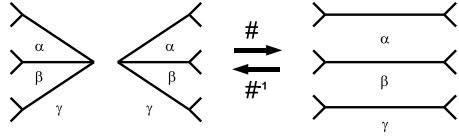}
\end{center}
\caption{The connected sum $\sharp$ and its inverse $\sharp^{-1}$.}
\label{connect.bmp}
\end{figure}

Specifically the connected sum with $\Delta^3$ on polytopes, denoted by
$\sharp \Delta^3$ (and often called a {\it cutting vertex} or 
{\it bistellar $0$-move}), corresponds to the operation
called a {\it blow up} on small covers (Figure \ref{blowup.bmp}).
Its inverse $\sharp^{-1}\Delta^3$ (often called a {\it bistellar $2$-move}) 
is called a {\it blow down}.

\begin{figure}[htbp]
\begin{center}
\includegraphics[bb=0 0 418 163,width=5cm]{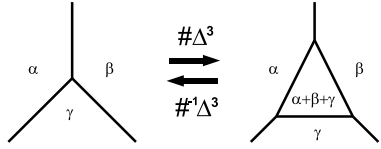}
\end{center}
\caption{The blow up $\sharp \Delta^3$ and the blow down 
$\sharp^{-1}\Delta^3$.}
\label{blowup.bmp}
\end{figure}

\begin{definition}[the surgery $\natural$]
The operation $\natural$ in Figure \ref{surgery.bmp} (from left to right)
is called the {\it surgery} along an edge $e$ 
and its inverse $\natural^{-1}$ (from right to left) is called the {\it inverse surgery}
along a pair of edges $e_1$ and $e_3$.
The operations $\natural$ and $\natural^{-1}$ both correspond to the 
ordinaly surgeries on small covers (cf. \cite{I}).
In the previous papers \cite{I}, \cite{K}, \cite{LY} and \cite{N},
surgeries $\natural$ and $\natural^{-1}$ were not distinguished 
and they both were denoted by the same symbol $\natural$.

\begin{figure}[htbp]
\begin{center}
\includegraphics[bb=0 0 474 167,width=6cm]{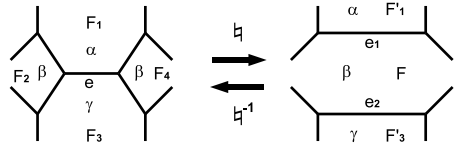}
\end{center}
\caption{The suegery $\natural$ and its inverse $\natural^{-1}$.}
\label{surgery.bmp}
\end{figure}

\end{definition}

We do not allow the surgeries $\natural$ and $\natural^{-1}$
when the $3$-connectedness of the $1$-skeleton
of $P$ is destroyed after doing it, i.e. the following cases respectively:

\begin{description}
\item[in case $\natural$] if and only if $F_2$ and $F_4$ are adjacent to a same face except $F_1$ and $F_3$
(involve the case when $F_1$ or $F_3$ is a quadrilateral),
\item[in case $\natural^{-1}$] if and only if $F'_1$ is adjacent to $F'_3$.
\end{description}

\begin{definition}[the connected sum along edges $\sharp^e$]
The operation $\sharp^e$ in Figure \ref{connedge.bmp} 
(from left to right) is called the 
{\it connected sum along edges} and its inverse is denoted by $(\sharp^e)^{-1}$.
We notice that the operation $\sharp^e$ is obtained as the composition $\sharp^e=\natural 
\circ \sharp$ as shown in the same figure (cf. \cite[Theorem 4.1(2)]{K}).
The operation $\sharp^e$ corresponds to the connected sum along the circle
$\pi^{-1}(e)$ on a small cover $M$ where $\pi:M \to P$ is the projection
(cf. \cite{LY}).

\begin{figure}[htbp]
\begin{center}
\includegraphics[bb=0 0 503 364,width=8cm]{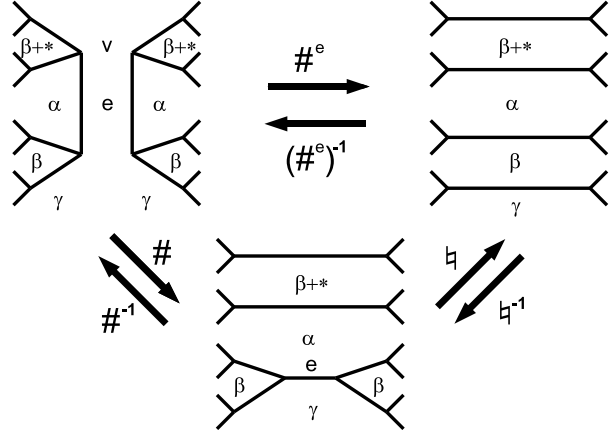}
\end{center}
\caption{The connected sum along the edges $\sharp^e$ and its inverse
$(\sharp^e)^{-1}$. The figure also shows that 
$\sharp^e= \natural \circ \sharp$.}
\label{connedge.bmp}
\end{figure}

Specifically the operations $\sharp^e P^3(3)$
(along a vertical edge in Figure \ref{prism.bmp}) and 
$\sharp^e \Delta^3$ are often called the {\it cutting edge} and 
the {\it bistellar $1$-move}, respectively (Figure \ref{cutedge.bmp}).
The former (left diagram) corresponds to a blow up 
along the circle $\pi^{-1}(e)$ on a small cover.
In this diagram we can choose not only $\beta + \gamma$ but also 
$\alpha + \beta + \gamma$ as a color of the center square when $\ast =0$.
The latter operation $\sharp^e \Delta^3  = \natural \circ \sharp \Delta^3$ 
corresponds to the Dehn surgery of type $\frac{2}{1}$ on a small cover 
(cf. \cite{N} or \cite[3.5]{K}).
This operation is denoted by $\natural^D$ and is called the {\it Dehn surgery}.
This operation can be done along an edge $e$ which satisfies 
the following condition:
$$\sum_e \lambda(F):= 
\sum_{\{ F \in \mathcal{F}|e \cap F \neq \emptyset \}} \lambda(F)=0.$$
We call such an edge {\it $0$-sum edge} (or {\it $4$-colored edge} 
in orientable case).
We notice that the Dehn surgery $\natural^D$ does not 
change the number of faces, and is invertible because
$(\natural^D)^{-1}=\natural^D$.
\end{definition}

\begin{figure}[htbp]
\begin{center}
\includegraphics[bb=0 0 564 152,width=9.5cm]{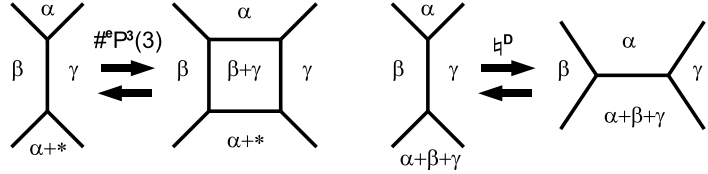}
\end{center}
\caption{The cutting edge $\sharp^e P^3(3)$ and the Dehn sugery $\natural^D
= \sharp^e \Delta^3$.}
\label{cutedge.bmp}
\end{figure}

From the Steinitz's theorem, a $3$-polytope $P$ is decomposable as a 
non-colored polytope if and only if there exist three edges such that
they are not adjacent to each other and the $1$-skeleton of $P$ 
becomes disconnected after cutting them.
Obviously if an orientable ($4$-)colored polytope $P$
is decomposable as a non-colored polytope then
$(P, \lambda)$ is also decomposable (as a $(\mathbb{Z}_2)^3$-colored 
polytope).
However we need a little attention for non-orientable colored polytopes.
We say that $(P, \lambda)$ is {\it quasi-decomposable} when
there exist two $(\mathbb{Z}_2)^3$-colored polytopes
$(P_1, \lambda_1)$ and $(P_2, \lambda_2)$
such that either 
$(P, \lambda)=(P_1, \lambda_1) \sharp (P_2, \lambda_1)$ or 
$(P, \lambda)= (P_1, \lambda_1) \sharp^e (P_2, \lambda_2)$,
except $P=P_1 \sharp^e \Delta^3 (=\natural^D P_1)$.

\begin{remark}\label{Rem:connD}
Notice that if a $1$-skeleton of $P$ becomes disconnected
after cutting three edges $\{ e', e'', e''' \}$ then these three edges 
are not adjacent to each other or meet at a vertex.
In fact if a pair $\{ e', e'' \}$ of these three edges is adjacent to each
other and the other edge $e'''$ is not adjacent to 
$e' \cap e''$ then the $1$-skeleton of $P$
becomes disconnected after cutting the edge $e'''$ and the edge which
is adjacent to $e' \cap e''$ and different from $e'$ and $e''$.
This contradicts the $3$-connectedness of the $1$-skeleton of $P$.
\end{remark}

\begin{proposition}\label{Pro:connD}
Let $(P, \lambda)$ be a $(\mathbb{Z}_2)^3$-colored polytope, 
but not $P^3(3)$.
If $P$ is decomposable as a non-colored polytope then 
$(P, \lambda)$ is quasi-decomposable.
\end{proposition}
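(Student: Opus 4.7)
My plan is to extract a proper three-edge cut from $P$ and then argue by cases on the colors of the three faces surrounding the cut.

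By the characterization stated just before Remark \ref{Rem:connD}, together with Remark \ref{Rem:connD} itself, I fix three pairwise non-adjacent edges $e_1,e_2,e_3$ whose removal disconnects the $1$-skeleton of $P$. Let $G_1,G_2,G_3$ be the three faces so labelled that $G_i$ contains both $e_i$ and $e_{i+1}$ (indices mod $3$), and set $c_i:=\lambda(G_i)$. At each endpoint of any $e_i$, the three incident faces are $G_{i-1}$, $G_i$, and some face outside $\{G_1,G_2,G_3\}$, so condition $(\star)$ forces $c_1,c_2,c_3$ to be pairwise distinct.

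\emph{Case A: $\{c_1,c_2,c_3\}$ is linearly independent.} I would split the $1$-skeleton along $\{e_1,e_2,e_3\}$ into two subgraphs $C_1,C_2$, cap each with a new apex vertex $v_i$ joined to the three cut-endpoints on its side, and invoke Steinitz's theorem to realize the result as $3$-polytopes $P_1,P_2$. The restriction of $\lambda$ together with the labels $c_1,c_2,c_3$ on the three faces at $v_i$ is a valid $(\mathbb{Z}_2)^3$-coloring $\lambda_i$, since condition $(\star)$ at $v_i$ is exactly the independence of the $c_i$'s. By construction $(P,\lambda)=(P_1,\lambda_1)\sharp(P_2,\lambda_2)$, establishing decomposability.

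\emph{Case B: $c_1+c_2+c_3=0$.} The Case~A capping fails because three faces meeting at a vertex must have linearly independent colors. Instead, I would produce an $\sharp^e$ decomposition via the identity $\sharp^e=\natural\circ\sharp$. Concretely I would apply a suitably chosen inverse surgery $\natural^{-1}$ to $P$ along a pair of edges of one of the $G_i$'s, coloring the inserted quadrilateral with a vector that makes the three cross-cut faces of the resulting polytope $(P',\lambda')$ linearly independent. Case~A then decomposes $(P',\lambda')=(P_1,\lambda_1)\sharp(P_2,\lambda_2)$, so $(P,\lambda)=\natural(P',\lambda')=(P_1,\lambda_1)\sharp^e(P_2,\lambda_2)$.

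To conclude quasi-decomposability, this $\sharp^e$-decomposition must not have $\Delta^3$ as one of its factors, since the definition excludes $P_1\sharp^e\Delta^3=\natural^D(P_1)$. The only way to force both $P_i$ to collapse to $\Delta^3$ is $P=\Delta^3\sharp^e\Delta^3=\natural^D\Delta^3=P^3(3)$, excluded by hypothesis. I expect the main obstacle to be ensuring admissibility of $\natural^{-1}$ in Case~B---namely that the chosen pair of edges preserves $3$-connectedness of the $1$-skeleton---and it is precisely in verifying this and ruling out the degenerate factor $\Delta^3$ that the hypothesis $P\neq P^3(3)$ is used.
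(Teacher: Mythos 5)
Your high-level plan does match the paper: reduce to the rank-$2$ case (after observing that rank-$3$ gives a genuine $\sharp$-decomposition) and then use one $\natural^{-1}$ to turn the rank-$2$ cut into a rank-$3$ cut, reading off an $\sharp^e$-decomposition via $\sharp^e=\natural\circ\sharp$. But the proposal leaves precisely the hard part open. In Case B you say you ``would apply a suitably chosen inverse surgery $\natural^{-1}$ \dots along a pair of edges of one of the $G_i$'s'' and then flag that ``the main obstacle'' is admissibility of that $\natural^{-1}$. That obstacle is the entire content of the paper's proof: one must identify concrete edges, show the three faces $F_i'$ on one side of the pillar are pairwise distinct, use $P\neq P^3(3)$ to guarantee enough faces are available, and then verify directly that the chosen $\natural^{-1}$ does not destroy $3$-connectedness. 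You cannot outsource this to Lemma~\ref{L2}, because the proof of Lemma~\ref{L2} in turn invokes Proposition~\ref{Pro:connD}; doing so would be circular. As written, Case~B is a restatement of the goal rather than an argument.

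The closing paragraph also contains a logical error. Quasi-decomposability excludes $P=P_1\sharp^e\Delta^3$ for an \emph{arbitrary} $P_1$, so you must rule out the case where exactly one of the two $\sharp^e$-factors is $\Delta^3$. You only address the case where \emph{both} factors are $\Delta^3$ (giving $P^3(3)$), which is a much weaker statement. Some additional argument is needed to show the decomposition produced in Case~B never has a $\Delta^3$ factor (or to handle that possibility separately); in the paper this is absorbed into the same face-counting analysis that controls where the three cut edges can sit.
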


\begin{proof}
It is sufficient to treat the case that $P$ is indecomposable
as a $(\mathbb{Z}_2)^3$-colored polytope.
Since $P$ is decomposable as a non-colored polytope,
there exist three non-adjacent edges such that $P$ becomes disconnected 
after cutting them out, and colors of the three faces adjacent to 
the these edges are not linearly independent as shown in Figure \ref{connD.bmp}.

\begin{figure}[htbp]
\begin{center}
\includegraphics[bb=0 0 542 193,width=8cm]{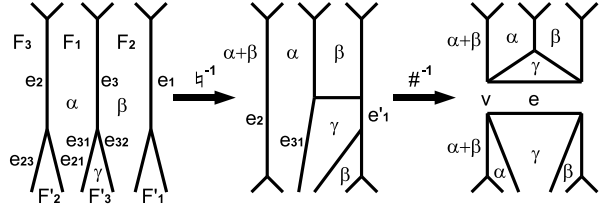}
\end{center}
\caption{The decomposition of a polytope along a $3$-cycle of 
$2$-independent faces.}
\label{connD.bmp}
\end{figure}

Since $P \neq P^3(3)$, $P$ has at least six faces so
we may assume that there are at least
two faces under the pillar ($F_i$'s) in the first diagram.
We first assume that $F_2'=F_3'$ (equivalently $e_{21}=e_{31}$ 
because if it is not so, the $1$-skeleton of $P$ becomes disconnected 
after cutting these two edges).
Then the $1$-skeleton of $P$ becomes disconnected after cutting three edges 
$e_1$, $e_{23}$ and $e_{32}$.
Since $P$ is indecomposable, these three edges actually meet at a vertex 
$F_1' \cap F_2 \cap F_3$ (see Remark \ref{Rem:connD}).
It should be $F_1'=F_2'=F_3'$ and it is a triangle.
This contradicts the assumption that there are at least two faces 
under the pillar.
Therefore the assumption $F_2'=F_3'$ is denied, and by a similar discussion 
we can reach the conclusion that $F_i' ~(i=1,2,3)$ are different 
faces each other.
We notice that if $F_3 \cap F_3' \neq \emptyset$ then it is clear that
$F_1 \cap F_1' =F_2 \cap F_2' =\emptyset$.
Therefore we can assume that $F_3 \cap F_3' = \emptyset$
by changing the role of $F_i$'s if necessary.

Now we can do the surgery $\natural^{-1}$ for edges $e_1$ and $e_{32}$,
and decompose $P$ into two $(\mathbb{Z}_2)^3$-colored polytopes 
$P_1$ and $P_2$ by cutting three non-adjacent edges 
$e_1', e_2$ and $e_{31}$ (second and third diagrams).
Then we have $\natural^{-1} P=P_1 \sharp P_2$ or 
equivalently $P= P_1 \sharp^e P_2$.
\end{proof}

Notice that the surgery $\natural$ and the Dehn surgery $\natural^D$ 
are not allowed along an edge of a quadrilateral and a triangle, respectively,
and the inverse surgery $\natural^{-1}$ is not allowed along a pair of
adjacent edges.
The following is a key lemma to relate the surgery to the connected sum. 

\begin{lemma}\label{L2}
Let $(P,\lambda)$ be a $(\mathbb{Z}_2)^3$-colored polytope.
Suppose that the $3$-connectedness of the $1$-skeleton of $P$ is 
destroyed after doing surgeries $\natural^{-1}$ or $\natural^D$,
but not the above trivial prohibited cases.
Then $(P, \lambda)$ is quasi-decomposable. 
In particular when $(P, \lambda)$ is (orientable) $4$-colored,
$(P, \lambda)$ is decomposable as a $(\mathbb{Z}_2)^3$-colored polytope.
\end{lemma}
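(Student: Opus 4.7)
The plan is to reduce Lemma \ref{L2} to Proposition \ref{Pro:connD} by showing that if the $3$-connectedness of the $1$-skeleton is destroyed after performing $\natural^{-1}$ or $\natural^D$ (outside of the trivially prohibited cases), then $P$ itself is already decomposable as a non-colored polytope, and $P \neq P^3(3)$. Once these two facts are in hand, Proposition \ref{Pro:connD} delivers the quasi-decomposability conclusion directly.

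First I would treat the inverse surgery $\natural^{-1}$ along edges $e_1, e_3$. By hypothesis $F'_1$ is not adjacent to $F'_3$, so the local picture in Figure \ref{surgery.bmp} is genuinely modified only inside a small neighborhood of the surgery region. Suppose the $1$-skeleton $\Gamma'$ of $P' := \natural^{-1} P$ becomes disconnected after removing three edges $\{a', b', c'\}$. By Remark \ref{Rem:connD} these edges are either pairwise non-adjacent or share a common vertex; the shared-vertex case corresponds exactly to one of the already-excluded trivial prohibited configurations on $P$. Outside the surgery region, edges of $\Gamma'$ are in bijection with edges of the $1$-skeleton $\Gamma$ of $P$, and I would do a short case analysis on how many of $a', b', c'$ lie in the modified region to produce three non-adjacent edges of $\Gamma$ (possibly involving $e_1$ or $e_3$ themselves, which are introduced or removed by the surgery) whose removal disconnects $\Gamma$.

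Second, the case of the Dehn surgery $\natural^D$ is handled analogously by using the factorization $\natural^D = \natural \circ \sharp \Delta^3$ from Figure \ref{cutedge.bmp}: the blow-up $\sharp \Delta^3$ preserves $3$-connectedness (a simplex cap adds a $3$-valent vertex of a new triangular corner), so any failure of $3$-connectedness after $\natural^D$ must come from the subsequent $\natural$, and the same kind of pull-back of a $3$-edge cut through the local surgery picture produces the required disconnecting triple of edges in $\Gamma$. In both cases, the assumption that we are not in a trivial prohibited configuration excludes the small polytopes for which the argument degenerates, and in particular rules out $P = P^3(3)$. Proposition \ref{Pro:connD} then gives that $(P,\lambda)$ is quasi-decomposable.

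Finally, for the orientable statement, I would upgrade quasi-decomposability to genuine decomposability by examining the colors at the $3$-edge cut. A quasi-decomposition uses either $\sharp$ or $\sharp^e$; the latter requires that the two faces on either side of the ``edge cycle'' share a color, which in an orientable coloring $\lambda(\mathcal{F}) \subset \{\alpha,\beta,\gamma,\alpha+\beta+\gamma\}$ would force a linear dependence incompatible with the $(\star)$ condition at the vertices involved. Hence a $4$-colored quasi-decomposition is actually a $\sharp$-decomposition. The main obstacle I expect is the bookkeeping in the translation of the $3$-edge cut through the surgery region: the correspondence between edges of $\Gamma$ and $\Gamma'$ fails inside the local picture, so one must carefully enumerate the possible relative positions of the cut edges with respect to $e_1, e_3$ (or the Dehn-surgery edge) and verify in each subcase that the resulting triple in $\Gamma$ is non-adjacent and disconnecting, while avoiding the already excluded trivial prohibited configurations.
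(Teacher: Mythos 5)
There is a genuine gap: you have inverted the hypothesis. In the paper's setup the obstruction to $\natural^{-1}$ is \emph{exactly} the condition $F'_1\cap F'_3\neq\emptyset$ (the definition block preceding $\sharp^e$ states that $\natural^{-1}$ destroys $3$-connectedness if and only if $F'_1$ is adjacent to $F'_3$), while the ``trivial prohibited case'' excluded by the lemma is that $e_1$ and $e_3$ are \emph{adjacent edges}. So the hypothesis of Lemma~\ref{L2} gives you $F'_1\cap F'_3\neq\emptyset$ together with $e_1\not\sim e_3$. Your proof instead assumes ``by hypothesis $F'_1$ is not adjacent to $F'_3$,'' which is the opposite of what is given; on the paper's terms that case is vacuous, since then $\natural^{-1}P$ would still be $3$-connected and there would be nothing to prove. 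You also conflate the failure of $3$-connectedness (a disconnecting set of $\le 2$ edges) with decomposability (a disconnecting set of \emph{three pairwise non-adjacent} edges): your pull-back argument starts from ``$\Gamma'$ becomes disconnected after removing three edges $\{a',b',c'\}$,'' but that is not what ``$3$-connectedness destroyed'' means. Once the hypothesis is read correctly the paper's argument is a one-liner: the three edges $e_1$, $e_3$, $F'_1\cap F'_3$ are pairwise non-adjacent (because $e_1\not\sim e_3$) and cutting them disconnects the $1$-skeleton of $P$, so $P$ is decomposable as a non-colored polytope, and Proposition~\ref{Pro:connD} then gives quasi-decomposability. No pull-back of edge cuts through the surgery is needed.

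The second problem is your treatment of $\natural^D$. You factor $\natural^D=\natural\circ\sharp\Delta^3$ and argue that any failure must come from the $\natural$ step, but the behaviour of a prohibited \emph{forward} surgery $\natural$ is precisely Lemma~\ref{L3}, which appears later in the paper and is substantially harder (it is where the gap in \cite{LY} occurs). Relying on it here would make the argument circular or at least front-load a much more delicate case analysis. The paper instead uses the other factorization $\natural^D=(\sharp^{-1}\Delta^3)\circ\natural^{-1}$: the blow-down $\sharp^{-1}\Delta^3$ is never obstructed, so any failure of $\natural^D$ is already a failure of $\natural^{-1}$, and case (1) applies directly. Your remark for the orientable $4$-colored case (that a quasi-decomposition must in fact be a $\sharp$-decomposition) is along the right lines, and in the paper it follows from the observation, made just before Proposition~\ref{Pro:connD}, that any non-colored decomposition of an orientable $4$-colored polytope is automatically a colored $\sharp$-decomposition.
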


\begin{proof}
In consequence of Proposition \ref{Pro:connD},
it is sufficient to prove that $(P, \lambda)$ is decomposable as
a non-colored polytope.\\
(1) \textbf{in case $\natural^{-1}$:} 
When the inverse surgery $\natural^{-1}$ is not allowed
in the right diagram of Figure \ref{surgery.bmp}, 
$F'_1$ is adjacent to $F'_3$.
Then cutting the three non-adjacent edges $e_1$, $e_3$ and $F'_1 \cap F'_3$ 
makes the $1$-skeleton of $P$ disconnected. 
That is $P$ is decomposable as a non-colored polytope.\\
(2) \textbf{in case $\natural^D$:} Since 
$\natural^D=(\sharp^{-1} \Delta^3) \circ \natural^{-1}$ 
and there is no obstacle for the blow down $\sharp^{-1}\Delta^3$,
the allowance of $\natural^D$ depends only on that of $\natural^{-1}$.
\end{proof}

\section{Constructions of Small Covers}

In this section we discuss constructions of $(\mathbb{Z}_2)^3$-colored 
polytopes (i.e. small covers) by using two operations 
$\sharp$ and $\natural$.
Henceforth polytopes are considered as $(\mathbb{Z}_2)^3$-colored polytopes.
In \cite{I}, Izmestiev proved the following theorem
which is a combinatorial translation of Theorem 1.1.

\begin{theorem}[Izmestiev]\label{TI}
Each $3$-colored polytope $(P^3, \lambda)$ can be constructed from 
$(I^3, \lambda_0)$ by using three operations 
$\sharp$, $\natural$ and $\natural^{-1}$.
\end{theorem}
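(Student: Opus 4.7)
My plan is to prove Theorem \ref{TI} by induction on the number $F$ of facets of $P$. For the base case, Euler's formula together with $3$-valence ($2E = 3V$) gives $E = 3(F-2)$, so $2E = 6F-12$. Since every facet of a $3$-colored polytope has even length at least $4$, the identity $\sum_i |F_i| = 2E$ forces $F \ge 6$, with equality only when all facets are quadrilaterals. A simple $3$-polytope with only quadrilateral facets is combinatorially the $3$-cube, and the $3$-coloring of $I^3$ is unique up to equivalence, so the minimal case is $(I^3,\lambda_0)$.

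For the inductive step, assume $F > 6$. If $(P,\lambda)$ is decomposable as a $3$-colored polytope, write $(P, \lambda) = (P_1, \lambda_1) \sharp (P_2, \lambda_2)$ with each summand $3$-colored; each $P_i$ has strictly fewer facets, the induction hypothesis produces a construction of each from $(I^3, \lambda_0)$, and these are reassembled by $\sharp$. Assume instead that $(P, \lambda)$ is indecomposable. I look for an edge $e$ at which $\natural$ is admissible, in which case $\natural_e(P,\lambda)$ is a $3$-colored polytope with fewer facets; by induction it is constructible, and $(P, \lambda)$ is recovered via $\natural^{-1}$. The operation $\natural$ along $e$ is obstructed precisely when (i) one of the faces $F_1, F_3$ in the configuration of Figure \ref{surgery.bmp} is a quadrilateral, or (ii) $F_2$ and $F_4$ share a facet other than $F_1, F_3$. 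Obstruction (ii) produces a separating $3$-cycle of faces in $P$; by Proposition \ref{Pro:connD} this makes $(P, \lambda)$ quasi-decomposable, and since a $3$-coloring is in particular a $4$-coloring, the orientable clause of Lemma \ref{L2} promotes this to genuine decomposability, contradicting the assumption. So after ruling out (ii), it remains to locate an edge whose two adjacent facets both have at least $6$ sides.

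The main obstacle is precisely this combinatorial assertion: in an indecomposable $3$-colored polytope $(P, \lambda)$ with $F > 6$, some edge is flanked on both sides by facets of length at least $6$. I plan to prove this by contradiction, assuming every edge has a quadrilateral neighbor. The identity $\sum_F (6 - |F|) = 12$ (from $6F - 2E = 12$) guarantees at least six quadrilateral facets, and because $F > 6$ at least one facet $H$ has length at least $6$. Under the contradiction hypothesis, every edge of $H$ borders a quadrilateral; tracing the ``necklace'' of quadrilaterals surrounding $H$ and using $3$-valence together with simplicity, a short local analysis either forces two of these quadrilaterals to share a second common edge (violating simplicity of $P$) or exhibits a separating $3$-cycle in the $1$-skeleton, and the latter again contradicts indecomposability via Proposition \ref{Pro:connD}. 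Once this combinatorial lemma is in hand, the induction closes and Theorem \ref{TI} follows.
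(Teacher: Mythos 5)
Theorem \ref{TI} is not proved in this paper; it is quoted from Izmestiev's paper \cite{I}, and the paper later derives a stronger statement (the corollary after Theorem \ref{NN}, and Proposition \ref{I2}) by a route that does not match yours. So your proposal has to be judged as a fresh proof attempt. The base case and the treatment of the decomposable case are fine, and your observation that in a $3$-coloring the two faces at the endpoints of an edge automatically have equal color (so that $\natural$ is coloring-compatible whenever it is combinatorially allowed) is correct, even though you do not state it. The gap is in the indecomposable case, and it is fatal as stated.

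Your ``main obstacle'' claim --- that an indecomposable $3$-colored polytope with more than six facets must contain an edge both of whose incident facets have at least six edges --- is false. The $3$-colored prisms $P^3(2l)$ with $l\geq 3$ are counterexamples. Take $P^3(6)$: it has $8>6$ facets (two hexagons $T,B$ and six quadrilaterals $S_1,\dots,S_6$), it is simple, and every $3$-cycle in its facet-adjacency graph (namely $T$-$S_i$-$S_{i+1}$ and $B$-$S_i$-$S_{i+1}$) corresponds to a vertex, so there is no separating $3$-cycle and $P^3(6)$ is indecomposable with respect to $\sharp$. Yet every vertical edge has two quadrilateral neighbors, and every horizontal edge has one hexagonal and one quadrilateral neighbor --- every edge borders a quadrilateral, so $\natural$ is obstructed everywhere (recall the obstruction explicitly ``involves the case when $F_1$ or $F_3$ is a quadrilateral''). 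Hence your induction cannot close on prisms. Tracing the ``necklace'' of quadrilaterals around the hexagon $T$ yields neither a doubled edge nor a separating $3$-cycle; it simply closes up into the prism. This is why Izmestiev's actual argument (as the paper recalls in Section 5) reduces an arbitrary $3$-colored polytope to the family of prisms $P^3(2l)$ using $\sharp$ and $\natural^{-1}$, treating prisms as an enlarged base case, and then assembles each $P^3(2l)=I^3\sharp^e\cdots\sharp^e I^3$ from $I^3$ using $\sharp^e=\natural\circ\sharp$. Equivalently, the paper's own route (Proposition \ref{Pro:2-ind}) works with the weaker notion of quasi-decomposability, which includes $\sharp^e$-splittings and therefore does cover the prisms; to repair your proof you would need either to add the prisms as extra base cases or to replace ``decomposable'' by ``quasi-decomposable'' throughout and account for $\sharp^e$.
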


We start from linear models and consider constructions of 
orientable small covers (i.e. $4$-colored polytopes).
Let $F$ be an $l$-gonal face of $P$.
We say that $F$ is {\it $j$-independent} ($j=2$ or $3$) when 
the rank of $\{ \lambda (F_1), \cdots, \lambda(F_l) \}$ is $j$
where $F_1, \cdots, F_l$ are faces adjacent to $F$.
In the case of orientable small covers, a $j$-independent face
is a face such that the number of colors of adjacent faces is $j$
($j=2$ or $3$).
Similarly we say that an edge is {\it $j$-colored} ($j=3$ or $4$) when
the number of the four faces adjacent to the edge is $j$.

\begin{proposition}\label{PN1}
Each $4$-colored polytope $(P^3, \lambda)$ can be constructed from $3$-colored 
polytopes and $\Delta^3$ by using two operations $\sharp$ and $\natural^D$.
\end{proposition}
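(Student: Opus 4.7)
The plan is an induction on the pair $(|F(P)|, N(P,\lambda))$ in lexicographic order, where $N(P,\lambda) := \#\{F \in \mathcal{F} : \lambda(F) = \alpha+\beta+\gamma\}$ counts the faces carrying the ``fourth'' color $c_4 := \alpha+\beta+\gamma$. The base cases $P = \Delta^3$ (already in the stated basis) and $N(P,\lambda) = 0$ (already $3$-colored) require no work. In the inductive step, if $(P,\lambda)$ is decomposable as a $(\mathbb{Z}_2)^3$-colored polytope, I split $(P,\lambda) = (P_1,\lambda_1) \sharp (P_2,\lambda_2)$ with $|F(P_i)| < |F(P)|$, apply the inductive hypothesis to each summand, and recombine via $\sharp$.

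So assume $(P,\lambda) \neq \Delta^3$ is indecomposable as a colored polytope with $N(P,\lambda) \ge 1$, and pick a face $F$ with $\lambda(F) = c_4$. The condition $(\star)$ combined with orientability forces every face adjacent to $F$ to carry a color in $\{\alpha,\beta,\gamma\}$, and consecutive edges of $\partial F$ to see distinct neighbor-colors at their common vertex. For each edge $e$ of $F$, with ``side'' faces $F, F'$ and ``end'' faces $F''_1, F''_2$, either $\lambda(F''_1) = \lambda(F''_2)$ (so $e$ is only $3$-colored) or $\{\lambda(F''_1), \lambda(F''_2)\}$ is the pair of basis colors distinct from $\lambda(F')$, in which case the four colors sum to $c_4 + \alpha + \beta + \gamma = 0$ and $e$ is a $0$-sum edge eligible for $\natural^D$. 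If $F$ is a triangle, the three neighbours of $F$ necessarily carry the three distinct basis colors and collapsing $F$ to its ``cone vertex'' realises $(P,\lambda) = (P_0,\lambda_0) \sharp \Delta^3$, contradicting indecomposability; hence $|\partial F| \ge 4$ and the ``triangle'' prohibition on $\natural^D$ does not block edges of $F$. I then locate a $0$-sum edge $e \subset \partial F$ along which $\natural^D$ is actually applicable (no $3$-connectedness obstruction) and apply it to obtain $(P',\lambda') := \natural^D(P,\lambda)$. A local bookkeeping of $\natural^D = \natural \circ \sharp \Delta^3$ — the forced color of the newly attached triangle is $\alpha+\beta+\gamma = c_4$, and the subsequent surgery absorbs the original $F$ into a basis-colored face — gives $N(P',\lambda') < N(P,\lambda)$ while preserving $|F(P)|$. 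Since $(\natural^D)^{-1} = \natural^D$, the construction of $(P',\lambda')$ supplied by the inductive hypothesis yields one of $(P,\lambda)$.

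The main obstacle is guaranteeing the existence of a usable $0$-sum edge on $\partial F$. The dangerous configuration is that every edge of $\partial F$ is only $3$-colored, which by the dichotomy above forces the neighbor-colors around $F$ to alternate between just two of $\{\alpha,\beta,\gamma\}$ on an even-length boundary. I would dispose of it by first preferring a $c_4$-face whose boundary uses all three basis colors — any such face automatically supplies a $0$-sum edge at the location of a color change — and, failing that, by exploiting the rigid alternating structure to exhibit three mutually non-adjacent edges whose removal disconnects $P$; Remark \ref{Rem:connD} and Proposition \ref{Pro:connD} then produce a colored decomposition contradicting indecomposability, so this bad configuration cannot persist and the induction closes.
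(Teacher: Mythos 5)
Your induction hinges on the claim that applying $\natural^D$ along a $0$-sum edge of a face $F$ with $\lambda(F) = c_4$ strictly decreases $N(P,\lambda)$ while fixing the face count, and that claim is false. The Dehn surgery $\natural^D = \sharp^e\Delta^3$ is the bistellar $1$-move (edge flip): the edge $e$ with side faces $F_1, F_3$ and end faces $F_2, F_4$ is replaced by a transverse edge, so $F_1, F_3$ each lose an edge and $F_2, F_4$ each gain one, but the set of faces and their colors are preserved verbatim. (The paper says exactly this: ``the Dehn surgery $\natural^D$ does not change the number of faces,'' and in the proof of Proposition~\ref{Pro:3-ind} a pentagon $F$ is ``transformed by $\natural^D P$ into a $3$-independent quadrilateral''---same face, same color, one fewer edge.) Your local bookkeeping of $\natural^D = \natural \circ \sharp\Delta^3$ is incorrect: if you blow up the vertex $v = F \cap F_a \cap F_b$ then the new triangle $T$ is forced to have $\lambda(T) = \lambda(F) + \lambda(F_a) + \lambda(F_b)$, and when $\lambda(F) = c_4$ and $\lambda(F_a),\lambda(F_b)$ are two basis elements this is the \emph{third basis element}, not $c_4$. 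The subsequent $\natural$ along the shortened edge $e$ then merges $T$ with the opposite end face $F_4$ (which by the $0$-sum condition has $\lambda(F_4) = \lambda(T)$), leaving the original $F$ untouched except for the lost edge. So $N$ is invariant under $\natural^D$ and your lexicographic induction on $(|F(P)|, N)$ does not close.

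The paper's proof uses a different secondary parameter. Fixing the outer induction on $|F(P)|$, it picks a $3$-independent face $F$ of minimal edge count $k$, and shows (this is the nontrivial part) that among the $0$-sum edges of $F$ there is one along which $\natural^D$ both is permitted (via Lemma~\ref{L2}, otherwise $P$ decomposes) \emph{and preserves the $3$-independence of $F$}; then $\natural^D P$ has a $3$-independent face with $k-1$ edges, and when $k=3$ a $\Delta^3$ summand splits off, dropping $|F(P)|$. Your proposal skips the preservation-of-$3$-independence check entirely, and that check is the crux of the paper's case analysis in Figure~\ref{pfProp.bmp}. If you want to repair your argument, replace $N$ by the minimal edge count of a $3$-independent face (or of a $c_4$-colored face) and then you must verify, edge by edge, that some admissible $0$-sum edge keeps $F$ $3$-independent after the flip --- which is precisely the content you would be re-deriving from the paper's proof.
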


\begin{proof}
By induction on the number of faces of $P$,
it is sufficient to prove the following\\
$(\ast)$ Each $4$-colored polytope $P \neq \Delta^3$ 
can be decomposed into two polytopes after doing the Dehn surgery 
$\natural^D (=(\natural^D)^{-1})$ finitely many times.

Assume that $P$ is $4$-colored and not $\Delta^3$.
Then there exists a $3$-independent face.
Let $F$ be a $3$-independent face such that the number of its edges is minimum
among $3$-independent faces of $P$, and $k$ be this number.
We prove the above $(\ast)$ by induction on $k$.
If $k=3$ (i.e., $F$ is a triangle) then we get a colored decomposition
$P= P' \sharp \Delta^3$ immediately.
We assume $k \ge 4$. Since $F$ is a $3$-independent face,
there exists a $4$-colored edge $e$ of $F$
(see Figure \ref{pfProp.bmp}).

We notice that there exist no trianglar face of $P$ because $k \ge 4$.
If the Dehn sugery $\natural^D$ is not allowed along an edge then
$P$ decomposes into two polytopes from Lemma \ref{L2}.
Therefore we may assume that the Dehn surgery $\natural^D$ is allowed 
along every $4$-colored edge of $F$.
If the $3$-independence of $F$ is preserved under
the Dehn surgery $\natural^D$ along some edge,
then we can reduce $P$ to $\natural^D P$ which has a $(k-1)$-gonal
$3$-independent face, and the proof ends by induction on $k$.
Therefore it is sufficient to show the existence of such an edge.

\begin{figure}[htbp]
\begin{center}
\includegraphics[bb=0 0 280 121,width=4.3cm]{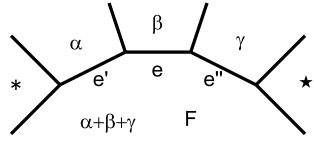}
\end{center}
\caption{A $4$-colored edge $e$ of a $3$-independent face $F$.}
\label{pfProp.bmp}
\end{figure}

In Figure \ref{pfProp.bmp} we assume that $F$ becomes $2$-independent 
after doing $\natural^D$ along the edge $e$.
Then an adjacent face of $F$ which is painted as $\beta$ must be unique,
and the other faces are painted by $\alpha$ and $\gamma$ alternatively
such as $\ast = \gamma$, $\cdots$, $\star = \alpha$.
In particular when $k=4$ (or even), the contradiction arises
because $\ast= \star$.
When $k \ge 5$ and this situation arises, we can do the Dehn surgery
$\natural^D$ along the edge $e'$ (or $e''$) preserving the $3$-independence
of $F$.
\end{proof}

\begin{remark}
In the proof of Proposition \ref{PN1} when we ignore the coloring of $P$,
the Dehn surgery $\natural^D$ can be continued until a triangle appears
for all faces, and then leads to a well-known fact that  
``Each $3$-polytope is bistellarly equivalent to each other'' 
or equivalently ``the $PL$-homeomorphism class of $S^2$ is unique'' 
(cf. \cite{M}). 
\end{remark}

Combining the above proposition and Theorem \ref{TI} and noting the relation 
$\natural^D = \natural \circ (\sharp \Delta^3)$, 
we have the following corollary immediately
(cf. \cite[Theorem 1.10]{N} and \cite[Corollary 4.4]{K}).

\begin{corollary}\label{TN1}
Each $4$-colored polytope $(P^3, \lambda)$ can be constructed from 
$(I^3, \lambda_0)$ and $\Delta^3$ by using three operations 
$\sharp$, $\natural$ and $\natural^{-1}$.
\end{corollary}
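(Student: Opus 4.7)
The plan is to chain together three ingredients that are already on the table: Proposition~\ref{PN1}, the identity $\natural^D = \natural \circ (\sharp\Delta^3)$ recorded in Section~3, and Izmestiev's Theorem~\ref{TI}. Since the statement we want is purely a combinatorial consequence, no new geometric input should be required.

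First I would invoke Proposition~\ref{PN1} on the given $4$-colored polytope $(P^3,\lambda)$. That proposition produces a finite sequence of connected sums $\sharp$ and Dehn surgeries $\natural^D$ assembling $(P,\lambda)$ from a collection of $3$-colored polytopes together with copies of $\Delta^3$. Thus the construction problem is reduced to two subtasks: (a) realizing each $3$-colored building block from $(I^3,\lambda_0)$ in the allowed moves, and (b) eliminating the occurrences of $\natural^D$ in favour of $\sharp$, $\natural$, $\natural^{-1}$.

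For (a) we apply Theorem~\ref{TI}: each $3$-colored polytope is constructible from $(I^3,\lambda_0)$ by $\sharp$, $\natural$, and $\natural^{-1}$, so every $3$-colored piece in the decomposition of $(P,\lambda)$ is already within our reach. For (b) we simply rewrite every Dehn surgery via the equation $\natural^D = \natural \circ (\sharp\Delta^3)$; this substitution is legitimate because $\Delta^3$ is one of the permitted basic pieces and $\sharp$ is one of the permitted operations, and then the trailing $\natural$ is also permitted. Concatenating (a) with the rewritten version of the Proposition~\ref{PN1} construction yields a recipe for $(P,\lambda)$ starting from $(I^3,\lambda_0)$ and $\Delta^3$ and using only the moves $\sharp$, $\natural$, $\natural^{-1}$, which is exactly what the corollary asserts.

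I do not foresee a substantive obstacle: the corollary is essentially bookkeeping on top of Proposition~\ref{PN1} and Theorem~\ref{TI}. The only small point worth verifying in passing is that the prohibited cases of $\natural$ and $\natural^{-1}$ appearing inside Izmestiev's construction do not conflict with the outer assembly, but since each application of Theorem~\ref{TI} is carried out internally on a fixed $3$-colored polytope before any connected sum is performed, these constraints are handled one piece at a time and never interact.
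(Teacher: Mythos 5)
Your proposal matches the paper's argument exactly: the paper derives Corollary~\ref{TN1} by combining Proposition~\ref{PN1} (decomposition into $3$-colored pieces and $\Delta^3$ via $\sharp$ and $\natural^D$), Theorem~\ref{TI} (construction of $3$-colored polytopes from $(I^3,\lambda_0)$ via $\sharp$, $\natural$, $\natural^{-1}$), and the rewriting $\natural^D = \natural\circ(\sharp\Delta^3)$. The paper states this chain in one sentence as ``immediate,'' so your slightly more detailed bookkeeping, including the remark about independence of the internal constraints, is a faithful expansion of the same proof.
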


Next we consider a construction of all $(\mathbb{Z}_2)^3$-colored polytope.
We recall the basic fact that each $3$-polytope has a face 
which has edges less than six (cf. \cite{G} etc.).
Such a face is called a {\it small face}.
If each small face can be compressed so that the number of faces of $P$
decreases then we can reduce all $(\mathbb{Z}_2)^3$-colored polytopes
to some basic polytopes by induction on the number of faces.
At first we compress $3$-independent small faces.

\begin{proposition}\label{Pro:3-ind}
Let $P$ be a $(\mathbb{Z}_2)^3$-colored polytope except
$\Delta^3$ and $P^3(3)$ as a non-colored polytope.
If there exists a $3$-independent small face of $P$, 
then either $P$ or $\natural^D P$ is quasi-decomposable.
\end{proposition}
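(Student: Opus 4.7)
The plan is to adapt the inductive strategy of Proposition~\ref{PN1} to the non-orientable setting, case-splitting on the number $k \in \{3,4,5\}$ of edges of the given $3$-independent small face $F$, and invoking Lemma~\ref{L2} whenever a candidate Dehn surgery is obstructed.

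For $k=3$, the three faces adjacent to $F$ already carry linearly independent colors, so the blow-down $\sharp^{-1}\Delta^3$ at $F$ is combinatorially available. I would check, using the exclusions $P \neq \Delta^3, P^3(3)$, that the three neighbouring faces are pairwise distinct and that the resulting decomposition $P = P' \sharp \Delta^3$ is genuine, exhibiting $P$ itself as quasi-decomposable.

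For $k \in \{4,5\}$, the first step is to show that any $3$-independent $4$-gon or $5$-gon has at least one $0$-sum edge. Because the colors $\lambda(F_1),\ldots,\lambda(F_k)$ span $(\mathbb{Z}_2)^3$ and are constrained by the condition $(\star)$ together with $\lambda(F)$, only finitely many color patterns around $F$ are possible, and in each pattern I would locate an edge $e$ of $F$ whose four surrounding face-colors sum to zero. If $\natural^D$ along $e$ is forbidden by the $3$-connectedness condition, then Lemma~\ref{L2} immediately gives that $P$ is quasi-decomposable. Otherwise, one performs $\natural^D$ along $e$ and examines the shrunk $(k-1)$-gonal face in $\natural^D P$: a color-bookkeeping argument, patterned on the concluding analysis of Proposition~\ref{PN1} (alternating colors around $F$ forcing contradictions for suitably chosen $e$), shows that $3$-independence is preserved. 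For $k=4$ the shrunk face is then a $3$-independent triangle in $\natural^D P$, and the $k=3$ argument applied to $\natural^D P$ produces a blow-down decomposition. For $k=5$ one obtains a $3$-independent quadrilateral in $\natural^D P$, and iterating the quadrilateral subcase yields quasi-decomposability of $\natural^D P$.

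The main obstacle lies in the $k=5$ analysis: I would need to rule out pathological colorings in which every candidate $0$-sum edge either destroys $3$-independence of the shrunk face under $\natural^D$, or is obstructed in a way whose resolution via Lemma~\ref{L2} fails to yield quasi-decomposability. Managing this requires a careful enumeration of the admissible color patterns around a $3$-independent pentagon, together with constant use of the hypothesis $P \neq P^3(3)$ (in the spirit of the case argument in Proposition~\ref{Pro:connD}) to exclude degenerate identifications that would introduce multi-edges or violate the simple polytope structure after the surgery.
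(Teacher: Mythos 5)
Your strategy is on the right track for the triangle and pentagon cases, but it breaks down precisely at the quadrilateral, and this breakdown propagates to make the pentagon case overshoot the allowed number of Dehn surgeries.

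The central gap is your claim that every $3$-independent quadrilateral has a $0$-sum edge. This is false. Take $\lambda(F)=\alpha$ and color the cyclic neighbours $F_1,F_2,F_3,F_4$ of the quadrilateral $F$ by $\beta,\gamma,\alpha+\beta,\gamma$. All four vertex conditions $(\star)$ hold, the neighbour colors span $(\mathbb{Z}_2)^3$, yet the four edge sums $\sum_{e_i}\lambda$ are $\alpha+\beta,\ \gamma,\ \beta,\ \gamma$, none of which vanish. (This configuration is realized on $I^3$, which is not excluded by the proposition.) So the Dehn surgery $\natural^D$ is simply not available at such a face, and your reduction ``quadrilateral $\to$ triangle via $\natural^D$'' cannot get started. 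The paper handles the $3$-independent quadrilateral entirely differently: it exhibits $P$ directly as $P'\sharp^e P^3(3)$ by a cutting-edge blow-down $(\sharp^e)^{-1}P^3(3)$ along a pair of opposite edges of $F$ (using the absence of adjacent triangles to guarantee the operation is admissible). No Dehn surgery at all is used in the quadrilateral case, and this is essential.

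This matters a second time when you reach $k=5$. In your scheme a pentagon requires one $\natural^D$ to become a quadrilateral, and then (by your own quadrilateral step) another $\natural^D$ to become a triangle. That proves at best that $(\natural^D)^2 P$ is quasi-decomposable, which is strictly weaker than the statement ``$P$ or $\natural^D P$ is quasi-decomposable.'' In the paper's argument the pentagon spends its single allowed Dehn surgery to land on a $3$-independent quadrilateral, and then the quadrilateral case decomposes $\natural^D P$ outright, with no further surgery. So you do need a surgery-free treatment of the $3$-independent quadrilateral, not just a reduction to the triangle. A smaller point: the paper's preliminary step disposes of \emph{all} triangular faces (not only $3$-independent ones) via Proposition~\ref{Pro:connD}, which is what later licenses the statement ``an adjacent triangle does not exist'' in the quadrilateral argument; your plan only addresses the $3$-independent triangle, so it does not set up that hypothesis.
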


\begin{proof}
If there exists a trianglar face of $P$ except $\Delta^3$ and $P^3(3)$ 
then $P$ is decomposable as a non-colored polytope and so 
$(P, \lambda)$ is quasi-decomposable from Proposition \ref{Pro:connD}.
Therefore we can assume that $P$ has no trianglar face.
Let $F$ be a $3$-independent small face of $P$.

(1) When $F$ is a quadrilateral, the situation around $F$
is shown as left of Figure \ref{4-gon1.bmp} where $a_i, b_j \in \mathbb{Z}_2$ 
with $b_2a_3=0$ and at least one of $a_1$ and $b_1$ is nonzero.
By a symmetry we may assume that $a_1=1$.
Since an adjacent triangle does not exist, and
we can always blow down $(\sharp^e)^{-1}P^3(3)$ for $F$
along the horizontal edges (if $a_3b_1=0$) or the
vertical edges (if $b_1=1, b_2=0$), as shown in Figure \ref{cutedge.bmp}.
That is $P= P' \sharp^e P^3(3)$.

(2) When $F$ is a pentagon, the situation around $F$ is shown as
right of Figure \ref{4-gon1.bmp} where $a_i, b_j, c_k \in \mathbb{Z}_2$ with 
$a_2b_3+b_2=1, b_2c_3+b_3=1$ and at least one of $a_1$, $b_1$ and
$c_1$ is nonzero.
We prove that there exists a $0$-sum edge $e$ of $F$ such that 
$F$ is transformed by $\natural^D P$ into a $3$-independent quadrilateral.
Then $\natural^D P$ is quasi-decomposable from the case (1).
Here if the Dehn surgery $\natural^D$ is not allowed then 
$P$ is quasi-decomposable from Lemma \ref{L2}.

\begin{figure}[htbp]
\begin{center}
\includegraphics[bb=0 0 539 230,width=8cm]{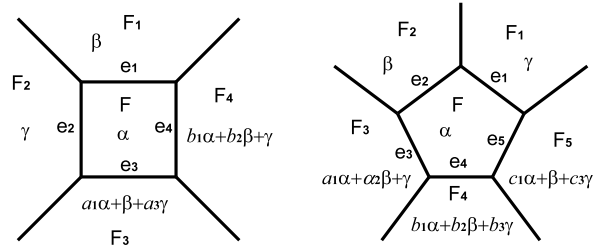}
\end{center}
\caption{$(\mathbb{Z}_2)^3$-colorings around a quadrilateral
and a pentagon.}
\label{4-gon1.bmp}
\end{figure}

i) The case $a_1=1$ (the case $c_1=1$ can be treated similarly).

a) When $a_2=1$, $e_2$ is a $0$-sum edge.
If $c_1=0$ or $b_1+b_2=1$ then the Dehn surgery $\natural^D$ along the 
edge $e_2$
preserves the $3$-independence of $F$ because
the rank of $\{ \lambda(F_1), \lambda(F_3), \lambda(F_4), \lambda(F_5) \}$ 
is three.
If $c_1=1$ and $b_1=b_2=0$ then we have $b_3=1$ and $e_3$ is a $0$-sum edge 
and $\{ \lambda(F_1), \lambda(F_2), \lambda(F_5) \}$ is linearly independent.
If $c_1=1$ and $b_1=b_2=1$ then we have $c_3=1$ and $e_1$ is a $0$-sum edge 
and $\{ \lambda(F_2), \lambda(F_3), \lambda(F_4) \}$ is linearly independent.
In all cases the Dehn surgery $\natural^D$ along a certain $0$-sum edge 
preserves the $3$-independence of $F$.

b) If $a_2=0$ then we have $b_2=1$ and $b_3+c_3=1$.
Therefore we obtain $\sum_{e_4}\lambda(F)= (b_1+c_1) \alpha$ and
$\sum_{e_5} \lambda(F)= (b_1+c_1+1) \alpha$, so
either $e_4$ or $e_5$ is a $0$-sum edge.
Since $\{ \lambda(F_1), \lambda(F_2), \lambda(F_3) \}$ is linearly 
independent, the Dehn surgery $\natural^D$ along $e_4$ or $e_5$
preserves the $3$-independence of $F$.

ii) The case $a_1=c_1=0$ and $b_1=1$.
We have $a_2b_3+b_2=1$, $b_2c_3+b_3=1$ and 
$\{ \lambda(F_1), \lambda(F_2), \lambda(F_4) \}$ is linearly independent.
In this case since $\sum_{e_3} \lambda(F)= 
(a_2+b_2+1) \beta + (b_3+1) \gamma =a_2(1+b_3) \beta +b_2c_3 \gamma$ and
$\sum_{e_5}\lambda(F)=(b_2+1)\beta+(b_3+c_3+1)\gamma
=a_2b_3 \beta +c_3(1+b_2)\gamma$,
either $e_3$ or $e_5$ is a $0$-sum edge
(if $a_2=c_3=1$ then $b_2+b_3=1$).
Then the Dehn surgery $\natural^D$ preserves the $3$-independence of $F$.
\end{proof}

\begin{remark}\label{Rm:3-ind}
In the above proposition, except an irregular quasi-decomposition
because of the prohibition of $\natural^D$,
we have the fact that each $3$-independent small face $F$ is compressible:
such as $P=P' \sharp \Delta^3$ when $F$ is a triangle, 
$P=P' \sharp^e P^3(3)$ or $P' \sharp P^3(3)$ when $F$ is a quadrilateral
and $P=\natural^D (P' \sharp^e P^3(3))$ or $\natural^D (P' \sharp P^3(3))$
when $F$ is a pentagon respectively.
In all cases the number of faces decreases by this decomposition.
\end{remark}

\begin{proposition}\label{Pro:2-ind}
Let $P$ be a $(\mathbb{Z}_2)^3$-colored polytope except 
$\Delta^3$, $P^3(3)$ and $I^3$ as a non-colored polytope.
If there exists a $2$-independent small face of $P$ then
either $P$ or $\natural^{-1}P$ is quasi-decomposable.
\end{proposition}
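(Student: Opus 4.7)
The plan is a case analysis on the number $k \in \{3,4,5\}$ of edges of the small face $F$, structurally parallel to the proof of Proposition \ref{Pro:3-ind} but with $\natural^{-1}$ now playing the role that $\natural^D$ played there. In each case I aim to exhibit either a direct quasi-decomposition of $(P,\lambda)$, or a pair of opposite edges of $F$ along which the inverse surgery $\natural^{-1}$ produces a $(\mathbb{Z}_2)^3$-colored polytope that is itself quasi-decomposable. Whenever the candidate $\natural^{-1}$-move is forbidden by the $3$-connectedness requirement, Lemma \ref{L2} immediately returns quasi-decomposability of $P$, so that obstruction is never fatal.

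The case $k=3$ is straightforward: a triangular face in $P$ with $P\neq \Delta^3, P^3(3)$ already makes $P$ decomposable as a non-colored polytope (slice along the three edges of the triangle), so Proposition \ref{Pro:connD} gives quasi-decomposability regardless of the coloring. For $k=4$, the four neighbors of $F$ take values in a rank-$2$ subspace $V$, and condition $(\star)$ applied at the four vertices of $F$ forces every cyclically consecutive pair of those values to be distinct. Since at most three nonzero vectors of $V$ can appear and they must alternate appropriately, a short enumeration shows that at least one pair of opposite neighbors of $F$ must carry the same color. Collapsing $F$ via $\natural^{-1}$ along the corresponding edges fuses the matching neighbors into a single face with an unambiguous color, and either the resulting polytope presents a $3$-cycle of faces to which Proposition \ref{Pro:connD} applies, or $\natural^{-1}$ itself is forbidden and Lemma \ref{L2} concludes. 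The exclusion $P\neq I^3$ is what rules out the sole configuration (the $3$-coloring of the cube) in which this strategy would collapse $P$ onto a basic polytope with nowhere left to go.

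The case $k=5$ is expected to be the main obstacle. Here the five neighbors of $F$ again lie in a rank-$2$ subspace $V$, their cyclic sequence has no two consecutive entries equal, and among the at most three nonzero vectors of $V$ I need to locate a pair of non-adjacent edges of $F$ along which $\natural^{-1}$ is legal and, after collapse, yields either a $2$-independent quadrilateral (reducing to the $k=4$ case already handled) or a directly quasi-decomposable polytope. The bookkeeping is heavier than in $k=4$ because the cyclic word of colors on the five neighbors has several distinct shapes up to rotation and reflection, and in each shape I must track the color-sums across the chosen edges to guarantee that the merged face is well-coloured and that Proposition \ref{Pro:connD} or Lemma \ref{L2} can be invoked. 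I expect the verification to parallel case ii) of Proposition \ref{Pro:3-ind} closely, with subcases organised according to which of the three nonzero elements of $V$ each of the five neighbors carries.
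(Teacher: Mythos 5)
Your $k=3$ case matches the paper's argument exactly. The trouble starts with $k=4$ and $k=5$.

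For $k=4$, your color-based observation (at least one pair of opposite neighbors of $F$ carries the same value of $\lambda$) does follow from $(\star)$ and pigeonhole in a rank-$2$ space, but the operation you then invoke is misidentified. You write ``Collapsing $F$ via $\natural^{-1}$ along the corresponding edges fuses the matching neighbors into a single face'' --- but $\natural^{-1}$ is the face-\emph{increasing} direction of the surgery: it inserts a new edge across a face and \emph{splits} it, it does not fuse or collapse anything. What you are describing is the blow down $(\sharp^e)^{-1}P^3(3)$, which is a different operation. The paper's argument at $k=4$ is instead structural: it conditions on how many of $F$'s neighbors are quadrilaterals (zero, one, or two), uses $\natural^{-1}$ on a \emph{neighbor} of $F$ to manufacture an adjacent quadrilateral, and iterates until $P$ visibly splits off an $I^3$-cap along a $3$-cycle of edges. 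Your route might be salvageable as a direct $(\sharp^e)^{-1}P^3(3)$ argument, but as written the mechanism is wrong.

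For $k=5$ you do not actually give a proof; you only forecast ``heavy bookkeeping'' parallel to case ii) of Proposition~\ref{Pro:3-ind} and list what you would need to track. This is the genuine gap. The paper's pentagon case requires no color case-analysis at all: having already reduced to $P$ with no triangle and no quadrilateral, one performs a single $\natural^{-1}$ along a pair of non-adjacent edges of $F$, which splits the pentagon into a triangle plus a quadrilateral. The mere presence of a triangular face in a polytope that is not $\Delta^3$ or $P^3(3)$ makes $\natural^{-1}P$ decomposable as a non-colored polytope (slice along the three edges of the triangle), and Proposition~\ref{Pro:connD} then gives quasi-decomposability; the only thing to check is that the $\natural^{-1}$ is permitted, which Lemma~\ref{L2} handles when it is not. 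You missed this shortcut, and your proposal's $k=5$ case is therefore incomplete: it names a plan but does not close it, and the plan it names (emulating the $0$-sum-edge hunt from the $3$-independent pentagon) is not the argument that actually works here, since for a $2$-independent pentagon the relevant fact is the shape of the split, not an identity on color sums.
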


\begin{proof}
If there exists a triangular face of $P$ except $\Delta^3$ and $P^3(3)$ then 
$P$ is decomposable as a non-colored polytope and so 
$(P, \lambda)$ is quasi-decomposable from Proposition \ref{Pro:connD}.
Therefore we can assume that $P$ has no trianglar face.
Let $F$ be a $2$-independent small face of $P$.
We notice that the inverse surgery $\natural^{-1}$ is allowed
in the category of $(\mathbb{Z}_2)^3$-colored polytopes
when $(P, \lambda)$ is not quasi-decomposable by Lemma \ref{L2}.

\begin{figure}[htbp]
\begin{center}
\includegraphics[bb=0 0 560 181,width=10cm]{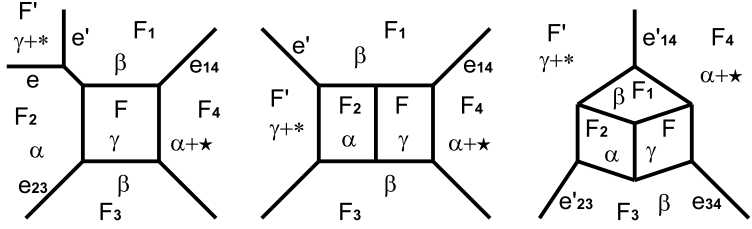}
\end{center}
\caption{The compression of a $2$-independent quadrilateral.}
\label{4-gonC.bmp}
\end{figure}

(1) When $F$ is a quadrilateral,
the number of quadrilaterals adjacent to $F$ is at most two
because $P \neq I^3$ and
the situation around a $2$-independent quadrilateral $F$ is shown as
Figure \ref{4-gonC.bmp} where $\star=\beta$ or $0$.
If $F_1$ and $F_2$ are quadrilateral (the third diagram), 
then $P$ can be decomposed into the connected sum of
a certain polytope $P'$ and $I^3$ with a certain coloring
because the $1$-skeleton of $P$ becomes disconnected after cutting
three edges $e'_{14}, ~e'_{23}$ and $e_{34}$.
If $F_2$ is quadrilateral and $F_1$ and $F_3$ have both at least five edges
(the second diagram) then we can do the surgery $\natural^{-1}$
along edges $e'$ and $e_{14}$, and lead to the third diagram,
so $\natural^{-1}P$ is decomposable (or $P$ is quasi-decomposable),
more precisely $(P, \lambda)=(P', \lambda') \sharp^e (I^3, \lambda'')$ 
for some $(\mathbb{Z}_2)^3$-colored polytope $(P', \lambda)$ and 
a $(\mathbb{Z}_2)^3$-coloring $\lambda''$ on $I^3$.
If $F$ is not adjacent to a quadrilateral (the first diagram)
then we can do the surgery $\natural^{-1}$ along edges $e$ and $e_{23}$
and lead to the second diagram i.e., $\natural^{-1}P$ is quasi-decomposable.

(2) When $F$ is a pentagon,
the situation around a $2$-independent pentagon $F$ is
shown as the first diagram in Figure \ref{5-gonRN.bmp}.
We can assume that $P$ has no triangle and no quadrilateral 
from (1).
We do the surgery $\natural^{-1}$ along the edges $e$ and $e'$
and divide $F$ into a triangle and a quadrilateral (the second diagram).
Therefore $\natural^{-1}P$ is quasi-decomposable from Proposition 
\ref{Pro:connD}.
\end{proof}

\begin{figure}[htbp]
\begin{center}
\includegraphics[bb=0 0 576 211,width=8cm]{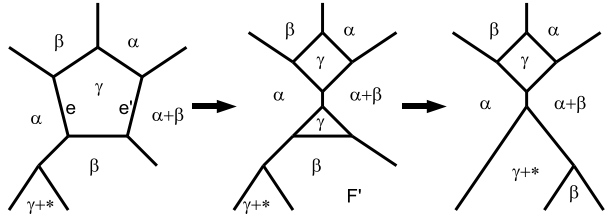}
\end{center}
\caption{The compression of a $2$-independent pentagon.}
\label{5-gonRN.bmp}
\end{figure}

\begin{remark}
When $F$ is a pentagon in the proof of the above proposition,
although the compression of the triangle of $\natural^{-1}P$ does not 
change the number of faces compared with the beginning,
$F$ is tranformed into a quadrilateral by this step 
(see the third diagram).
Then we apply the argument (2) in the proof of Proposition \ref{Pro:2-ind}
to the quadrilateral so that the number of faces in the resulting
polytope is one less than the number of faces in $P$.
\end{remark}

In consequence of Propositions \ref{Pro:3-ind} and \ref{Pro:2-ind},
we can reduce any $(\mathbb{Z}_2)^3$-colored polytope to
$\Delta^3$, $I^3$ and $P^3(3)$ with a certain coloring
by using the surgeries $\natural^{-1}, ~\natural^D =
(\sharp^{-1} \Delta^3) \circ \natural^{-1}$
(without  $\natural$) and the inverses of connected sums 
$\sharp, ~ \sharp^e(= \natural \circ \sharp)$.
From Examples \ref{Ex:prism} and \ref{Ex:cube} the possible colorings on 
$P^3(3)$ (resp. $I^3$) are only three (resp. four) types.
We notice that $(I^3, \lambda_i)= (P^3(3), \lambda_i) \sharp^e 
(P^3(3), \lambda_i) ~(i=1,2,3)$ along vertical edges 
and $(P^3(3), \lambda_3) = \Delta^3 \sharp \Delta^3$.
Therefore there exist four basic $(\mathbb{Z}_2)^3$-colored polytopes:
$(I^3, \lambda_0)$ ($3$-colored), $\Delta^3$ (orientable $4$-colored), 
$(P^3(3), \lambda_1)$ (non-orientable $4$-colored) and 
$(P^3(3), \lambda_2)$ (non-orientable $5$-colored).
Since the surgeries $\natural$ and $\natural^{-1}$ preserves 
the number of colors of faces, and the connected sum $\sharp$ increases 
the number of faces,
it is clear that these four polytopes can not be constructed
from others by using only $\sharp$, $\natural$ and $\natural^{-1}$.
Therefore we have,

\begin{theorem}\label{NN}
Each $(\mathbb{Z}_2)^3$-colored polytope $(P^3, \lambda)$ 
can be constructed from
$\Delta^3$, $(I^3, \lambda_0)$, $(P^3(3), \lambda_1)$ 
and $(P^3(3), \lambda_2)$ by using two operations $\sharp$ and $\natural$.
\end{theorem}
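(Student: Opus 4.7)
The plan is to prove Theorem \ref{NN} by strong induction on the number of facets of $P$. Essentially all of the substantive work has been done in Propositions \ref{Pro:3-ind} and \ref{Pro:2-ind}; what remains is to handle the base cases and to rewrite each decomposition produced by those propositions as a forward construction in $\sharp$ and $\natural$, using the identities $\sharp^e=\natural\circ\sharp$ and $\natural^D=\natural\circ\sharp\Delta^3$ recorded earlier in the paper.

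For the base cases I handle each coloring of $\Delta^3$, $P^3(3)$, and $I^3$ directly. The polytopes $\Delta^3$, $(I^3,\lambda_0)$, $(P^3(3),\lambda_1)$, and $(P^3(3),\lambda_2)$ are the declared building blocks. The third prism coloring satisfies $(P^3(3),\lambda_3)=\Delta^3\sharp\Delta^3$, and the three remaining cube colorings satisfy $(I^3,\lambda_i)=(P^3(3),\lambda_i)\sharp^e(P^3(3),\lambda_i)=\natural\bigl((P^3(3),\lambda_i)\sharp(P^3(3),\lambda_i)\bigr)$ for $i=1,2,3$, each built using only $\sharp$ and $\natural$. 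For the inductive step, I take a colored polytope $P$ with strictly more facets than $I^3$. Every 3-polytope admits a small face, and every face of a $(\mathbb{Z}_2)^3$-colored polytope is either 2-independent or 3-independent, so one of Propositions \ref{Pro:3-ind} and \ref{Pro:2-ind} applies. Its conclusion gives $Q=P_1\sharp P_2$ or $Q=P_1\sharp^e P_2$ for some $Q\in\{P,\natural^{-1}P,\natural^D P\}$, which I rewrite as an expression for $P$ as a word in $\sharp$ and $\natural$ applied to $P_1$, $P_2$, and $\Delta^3$ (using $\natural\circ\natural^{-1}=\mathrm{id}$ and $\natural^D=\natural\circ\sharp\Delta^3$). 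Since connected sums satisfy $f(P_1\sharp P_2)=f(P_1)+f(P_2)-3$ with $f(P_i)\geq 4$, both $P_1$ and $P_2$ have strictly fewer facets than $P$, and the induction hypothesis applies.

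The main obstacle is the case-by-case verification, inside each branch of Propositions \ref{Pro:3-ind} and \ref{Pro:2-ind}, that the reassembly in the forward direction uses only admissible operations — especially that the surgeries $\natural$ reappearing when one inverts $\natural^{-1}$ or $\natural^D$ remain legal under the 3-connectedness constraint on the $1$-skeleton. This is essentially already secured by Lemma \ref{L2}: the propositions invoke $\natural^{-1}$ and $\natural^D$ only where Lemma \ref{L2} permits, which is precisely the condition making their inverses legal as well. Combined with the observation that the four basic colored polytopes are mutually unreachable by $\sharp$ and $\natural$ alone — $\natural$ preserves the number of colors of facets while $\sharp$ strictly increases the facet count — this yields the theorem.
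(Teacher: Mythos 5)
Your overall strategy matches the paper's: reduce via Propositions \ref{Pro:3-ind} and \ref{Pro:2-ind}, rewrite each quasi-decomposition as a forward $\sharp$/$\natural$ word using $\sharp^e=\natural\circ\sharp$ and $\natural^D=\natural\circ\sharp\Delta^3$, handle the small polytopes by the listed identities, and note minimality from the colour/facet-count invariance. The base-case identities $(P^3(3),\lambda_3)=\Delta^3\sharp\Delta^3$ and $(I^3,\lambda_i)=(P^3(3),\lambda_i)\sharp^e(P^3(3),\lambda_i)$ are exactly those the paper records. Two points, however, need attention.

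First, a small setup issue: the inductive step must cover every $(P,\lambda)$ that is not combinatorially $\Delta^3$, $P^3(3)$, or $I^3$, not just those with $f(P)>6$. For example $\Delta^3\sharp P^3(3)$ has six facets but is not $I^3$, so it is neither a base case nor a case of your inductive step as you phrased it. This is easy to repair, since the propositions only exclude $\Delta^3$, $P^3(3)$, $I^3$ as non-coloured polytopes, but the phrasing ``strictly more facets than $I^3$'' leaves a hole.

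Second, and more substantively, the claim that ``both $P_1$ and $P_2$ have strictly fewer facets than $P$'' is not true in all branches, so strong induction on the facet count alone does not close. The problem is the $2$-independent pentagon case in Proposition \ref{Pro:2-ind}: there the conclusion is that $\natural^{-1}P$ (which has $f(P)+1$ facets) is quasi-decomposable, and this quasi-decomposition can be $\natural^{-1}P=P_1\sharp\Delta^3$. Then $f(P_1)=f(\natural^{-1}P)-1=f(P)$, so one of the pieces has the \emph{same} facet count as $P$, and your induction hypothesis does not apply to it. Your formula $f(P_1\sharp P_2)=f(P_1)+f(P_2)-3$ is stated for $Q=P$ and silently carried over to $Q=\natural^{-1}P$, where the extra facet added by $\natural^{-1}$ eats the strict inequality precisely when the separated piece is $\Delta^3$. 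The paper flags exactly this subtlety in the remark following Proposition \ref{Pro:2-ind}: after that step the facet count is unchanged, but the relevant small face has been turned from a pentagon into a quadrilateral, and a second application of the quadrilateral case then strictly lowers the facet count. To make your induction sound you must either (a) take a lexicographic induction on $\bigl(f(P),\, \text{edge count of a chosen small face}\bigr)$ or similar, or (b) treat the pentagon case by an explicit two-step reduction before invoking the inductive hypothesis, as the paper does.
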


The topological translation of the above theorem is Theorem \ref{ThN2}
shown in the introduction.
We restrict the above theorem to $3$- (resp. $4$-)colored polytopes,
and obtain improvements of Theorem \ref{TI} and Corollary \ref{TN1}
as follows.

\begin{corollary}
(1) Each $3$-colored polytope $(P^3, \lambda)$ can be constructed from
$(I^3, \lambda_0)$ by using two operations $\sharp$ and
$\natural$.\\
(2) Each $4$-colored polytope $(P^3, \lambda)$ can be constructed from
$\Delta^3$ and $(I^3, \lambda_0)$ by using two operations $\sharp$ and
$\natural$.
\end{corollary}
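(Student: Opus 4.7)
The plan is to deduce the corollary from Theorem \ref{NN} by checking that when one restricts to $3$-colored (respectively, orientable $4$-colored) polytopes, the reduction procedure in the proof of Theorem \ref{NN} never introduces the non-orientable building blocks $(P^3(3),\lambda_1)$ or $(P^3(3),\lambda_2)$. The central observation is that both operations $\sharp$ and $\natural$ preserve the set of colors that appear on $(P,\lambda)$: the connected sum glues two vertex neighborhoods and can be performed (after an element of ${\rm Aut}(\mathbb{Z}_2)^3$) so that no new color is introduced, and the surgery $\natural$ only discards a quadrilateral, keeping the labels of the surviving facets. Hence if $(P,\lambda)$ uses $k$ colors, every intermediate polytope in the reduction uses at most $k$ colors, and every basic piece extracted is colored by a subset of those colors.

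For part (1), I would exploit the fact that a $3$-colorable polytope has every $2$-face of even length (the well-known criterion recalled in Section~2), so no triangular or pentagonal face can appear. Moreover every face $F$ is automatically $2$-independent: if $\lambda(F)$ is one basis vector then the facets around $F$ use only the remaining two basis vectors in alternation. Consequently only case~(1) of Proposition~\ref{Pro:2-ind} is invoked, and there only the quadrilateral decompositions $P=P'\sharp I^3$ or $P=P'\sharp^e I^3$ arise. A direct look at the decomposition shows that the $I^3$-summand inherits a $3$-coloring (opposite faces are forced to carry the same basis vector), so it is $(I^3,\lambda_0)$, and $P'$ is again $3$-colored. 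Induction on the number of facets, combined with $\sharp^e=\natural\circ\sharp$, yields assertion~(1).

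For part (2), starting from an orientable $4$-colored $(P,\lambda)$, I would verify that each reduction step of Propositions~\ref{Pro:3-ind} and \ref{Pro:2-ind} stays inside the orientable category. Because $\natural$ and $\sharp$ do not introduce new colors, the color set remains inside $\{\alpha,\beta,\gamma,\alpha+\beta+\gamma\}$ throughout, so Theorem~2.1 keeps the intermediate polytopes orientable. The decompositions produced by Propositions~\ref{Pro:3-ind} and \ref{Pro:2-ind} then yield summands which are again orientable $4$-colored; the Dehn surgery pieces appearing via $\natural^D=\natural\circ\sharp\Delta^3$ plug in $\Delta^3$, and the quadrilateral case introduces at most $(I^3,\lambda_0)$. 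Among the four basic polytopes of Theorem~\ref{NN}, the two prisms $(P^3(3),\lambda_1)$ and $(P^3(3),\lambda_2)$ are non-orientable (each contains a color outside the admissible four-element set of Theorem~2.1), so they cannot appear as building blocks, leaving exactly $\Delta^3$ and $(I^3,\lambda_0)$.

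The one non-trivial verification — which I expect to be the main obstacle — is checking in each case of the decomposition arguments of Propositions~\ref{Pro:3-ind} and \ref{Pro:2-ind} that the quasi-decomposition $(P,\lambda)=(P',\lambda')\sharp^*(Q,\mu)$ preserves the given restriction on colors, i.e.\ that both $(P',\lambda')$ and $(Q,\mu)$ are $3$-colored (resp.\ orientable $4$-colored) whenever $(P,\lambda)$ is. This is a straightforward but case-by-case inspection of the colored diagrams already drawn in Section~3; once done, the two statements of the corollary follow immediately from the induction furnished by Theorem~\ref{NN}.
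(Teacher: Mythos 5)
Your proposal is correct and follows the same route the paper takes: the corollary is stated in the paper as an immediate restriction of Theorem \ref{NN}, relying on the observation (made just before that theorem) that the surgery $\natural$, its inverse, and the connected-sum decompositions all preserve the color set, so a $3$-colored (resp. orientable $4$-colored) polytope can only decompose into basic pieces from among $\Delta^3$, $(I^3,\lambda_0)$, $(P^3(3),\lambda_1)$, $(P^3(3),\lambda_2)$ that are themselves $3$-colored (resp. orientable). Your write-up simply spells out the color-tracking that the paper leaves implicit, including the observations that a $3$-colored polytope has only even faces and that every face there is automatically $2$-independent; these are correct and consistent with what the paper uses.
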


\section{Non-decreasing constructions of small covers}

Since the operations $\natural$ and its inverse $\natural^{-1}$ both correspond
to surgeries on small covers, we followed Izmestiev's standpoint in \cite{I}
and used the surgery $\natural$ in the previous section.
However in \cite{LY} L\"{u} and Yu considered a ``non-decreasing'' construction 
by only operations that number of faces is not decreased,
and therefore the use of $\natural$ is prohibited.
To cancel some obstacles they produced new operations $\sharp^{eve}$,
$\sharp^\Delta$ and $\sharp^\copyright_i$,
and showed the following theorem (cf. \cite[Theorem 1.1]{LY}).

\begin{theorem}[L\"{u} and Yu]\label{ThLY}
Each $(\mathbb{Z}_2)^3$-colored polytope $(P^3, \lambda)$ can be 
constructed from $\Delta^3$ and $(P^3(3), \lambda_2)$ by using seven operations
$\sharp$, $\sharp^e$, $\sharp^{eve}$, $\natural^{-1}$, $\sharp^\Delta$,
$\sharp^\copyright_4$ and $\sharp^\copyright_5$.
\end{theorem}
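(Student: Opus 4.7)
The plan is to mimic the inductive strategy that proved Theorem \ref{NN}, but to work with a measure that does not strictly require $\natural$ as a reduction tool. I would induct on the number of faces of $P$, and show that every $(\mathbb{Z}_2)^3$-colored polytope $(P,\lambda)$ which is not one of the two basic pieces $\Delta^3$ or $(P^3(3),\lambda_2)$ arises as $\star((P',\lambda'),\dots)$ for some operation $\star$ in the list and some polytope $(P',\lambda')$ with strictly fewer faces (or, for $\natural^{-1}$, a polytope that lies earlier in a well-founded secondary order such as lexicographic order on (number of faces, maximal face-size) after the surgery has been unwound). Since each of the seven operations is one of these controlled moves when read from right to left, establishing such a reduction at every step yields the theorem.

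Every $3$-polytope has a face with at most five edges, so I would case-split on a small face $F$. First I would dispose of configurations where $P$ already decomposes as a non-colored polytope: by Proposition \ref{Pro:connD}, $(P,\lambda)$ is quasi-decomposable, and such quasi-decompositions are realized by $\sharp$ or $\sharp^e$, both of which are on the allowed list. The remaining cases are those where $P$ is indecomposable as a non-colored polytope but still possesses a small face. For a $3$-independent small face I would follow the blueprint of Proposition \ref{Pro:3-ind} and Remark \ref{Rm:3-ind}; but because the Dehn surgery $\natural^D=\natural\circ\sharp\Delta^3$ is not permitted, the final $\natural^D$-step would be replaced by the built-in moves $\sharp^\Delta$ (for triangles), or by $\sharp^\copyright_4$, $\sharp^\copyright_5$ (pentagons attached in specific coloured patterns), which were introduced by L\"{u} and Yu precisely for this purpose. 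For a $2$-independent small face I would copy the analysis of Proposition \ref{Pro:2-ind}, using $\natural^{-1}$ to split a $2$-independent pentagon into a triangle plus a quadrilateral, then recursing; the quadrilateral case is handled by $\sharp^{eve}$ together with $\sharp^e$.

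Once every type of small face is processed, the polytopes that cannot be reduced at all must have all faces of size $\le 5$, lie outside the previous lists, and resist every operation; one checks directly that the only such polytopes are the two basic pieces themselves. Combined with the inductive hypothesis this completes the proof. Auxiliary small-cover identities like $(I^3,\lambda_0)=(P^3(3),\lambda_2)\sharp^e(P^3(3),\lambda_2)$ and the fact that $(P^3(3),\lambda_1)$ and $(P^3(3),\lambda_3)$ are both buildable from the two basic pieces via $\sharp^e$ and $\sharp$ need to be verified once and for all as lemmas, so that the induction only has to reach one of these pieces and not an explicit list.

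The main obstacle is the coverage problem for $2$-independent quadrilaterals and $3$-independent pentagons whose coloured neighbourhoods refuse to fit any of the seven templates. In the decreasing construction one simply performs $\natural$ to absorb a problematic edge and then decomposes; here no such escape hatch exists, and one is forced to manufacture a decomposition by first enlarging $P$ via $\sharp^\copyright_4$ or $\sharp^\copyright_5$ and only then peeling off a piece, which requires that the enlarged polytope be genuinely simpler in the chosen inductive measure. Verifying that each sub-case actually does yield a strict decrease (and that no infinite loop of $\natural^{-1}$ followed by $\sharp^\copyright_i$ can occur) is the delicate step, and is presumably where the gap identified in Remark \ref{gap} is located; a rigorous proof demands an exhaustive check of the admissible coloured neighbourhoods of every small face, which is exactly what Sections 3 and 4 of the present paper systematise.
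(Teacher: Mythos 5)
Your high-level strategy (induct on the number of faces, dispose of decomposable cases via Proposition \ref{Pro:connD}, compress $3$-independent and $2$-independent small faces using the mechanisms of Propositions \ref{Pro:3-ind} and \ref{Pro:2-ind}, and substitute $\sharp^\Delta$, $\sharp^\copyright_i$, $\sharp^{eve}$ for the forbidden decreasing surgery $\natural$) does match the skeleton of L\"{u}--Yu's original argument, which is the proof the paper is implicitly referring to. Note, though, that the paper itself does not give a standalone proof of this theorem: it quotes the result, identifies a gap in the source, and explains how that gap is repaired by Lemma~\ref{L3}; the paper then proves the strictly stronger Theorem~\ref{N3} with only four operations.

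The problem with your proposal is that you have mislocated the gap. You speculate that the delicate step is a ``coverage problem'' for $2$-independent quadrilaterals and $3$-independent pentagons, or a possible infinite loop of $\natural^{-1}$ and $\sharp^\copyright_i$. That is not what goes wrong. As Remark~\ref{gap} explains, the actual defect in L\"{u}--Yu's proof is in their key obstruction lemma (their Proposition~2.5): they claim that whenever the surgery $\natural$ along an edge would destroy $3$-connectedness (outside the trivial quadrilateral case), $P$ must be decomposable as a \emph{non-colored} polytope, i.e.\ splits along a $3$-cycle of edges. Izmestiev proved this only in the special case when a neighbouring face is a quadrilateral; L\"{u}--Yu asserted it in general, and Figure~\ref{counterEX.bmp} gives an explicit counterexample. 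The correct conclusion (Lemma~\ref{L3}) is weaker: $(P,\lambda)$ is only \emph{quasi}-decomposable, which may require splitting via $\sharp^e$ rather than $\sharp$, after first performing an auxiliary $\natural^{-1}$. Your sketch never touches this point, and since every branch of the induction ultimately leans on an obstruction lemma of this type, a proof attempt that reproduces L\"{u}--Yu's overstated version would inherit exactly the same flaw. A minor secondary issue: the auxiliary identity you cite, $(I^3,\lambda_0)=(P^3(3),\lambda_2)\sharp^e(P^3(3),\lambda_2)$, is false --- that $\sharp^e$ produces $(I^3,\lambda_2)$, not the $3$-colored cube; $(I^3,\lambda_0)$ is instead produced via a coloring change $\sharp^\copyright_4$, as noted before Theorem~\ref{N3}.
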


However there is a gap in the proof of their paper (we shall point it out later).
In this section we also consider a non-decreasing construction 
of small covers in their standpoint.
At first we start with $3$-colored polytopes (i.e. linear models).
In \cite{I} Izmestiev claimed that each $3$-colored polytope can be 
constructed from $3$-colored prisms $P^3(2l)$ by using $\sharp$ 
and $\natural^{-1}$ in the proof of Theorem \ref{TI}.
From the relation $P^3(2l)= I^3 \sharp^e \cdots \sharp^e I^3$,
we can obtain a construction of $3$-colored polytopes 
as follows.

\begin{proposition}\label{I2}
Each $3$-colored polytope $(P^3, \lambda)$ can be constructed from 
$(I^3, \lambda_0)$ by using three operations $\sharp$, $\sharp^e$ and 
$\natural^{-1}$.
\end{proposition}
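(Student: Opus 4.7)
The plan is to assemble the proposition from two ingredients. The first is an intermediate claim extracted from Izmestiev's proof of Theorem \ref{TI}: each $3$-colored polytope $(P,\lambda)$ can be constructed from the family of $3$-colored prisms $\{P^3(2l) \mid l \ge 2\}$ using only $\sharp$ and $\natural^{-1}$, with no use of the decreasing surgery $\natural$ in the build-up direction. I would begin by isolating this fact from the argument in \cite{I}; it is already present there even though Izmestiev states the weaker conclusion involving $\natural$ as well.

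The second ingredient is the observation that for every $l \ge 2$, the $3$-colored prism $P^3(2l)$ can itself be written as an iterated connected sum along edges,
\[
P^3(2l) \;=\; (I^3,\lambda_0) \,\sharp^e\, (I^3,\lambda_0) \,\sharp^e\, \cdots \,\sharp^e\, (I^3,\lambda_0),
\]
obtained by repeatedly gluing a fresh copy of $(I^3,\lambda_0) = P^3(4)$ onto the current prism along a vertical edge of each. I would prove this by induction on $l$: performing $\sharp^e$ on $P^3(2l)$ and $(I^3,\lambda_0)$ along matching vertical edges yields a prism with a larger even perimeter, and since the $3$-coloring of any prism $P^3(2l)$ is unique up to equivalence (the two end-faces share one color while the lateral faces alternate between the other two), the resulting colored polytope must agree with the standard $3$-coloring on the new prism.

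Substituting the prism identity from the second step into the construction from the first immediately yields a construction of every $(P,\lambda)$ from $(I^3,\lambda_0)$ using only $\sharp$, $\sharp^e$, and $\natural^{-1}$, which is exactly the statement to be proved. The main obstacle is the inductive step of the second ingredient, which reduces to unwinding the definition $\sharp^e = \natural \circ \sharp$ and checking that this composite, applied to two prisms along appropriately chosen vertical edges, produces another prism with the expected perimeter and $3$-coloring; this is a routine local combinatorial check around the two vertices and the two vertical edges involved.
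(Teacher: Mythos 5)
Your proposal is correct and follows essentially the same two-step argument the paper uses: extract from Izmestiev's proof of Theorem \ref{TI} that every $3$-colored polytope is built from the prisms $P^3(2l)$ using only $\sharp$ and $\natural^{-1}$, and then express each $P^3(2l)$ as $(I^3,\lambda_0)\,\sharp^e\cdots\sharp^e\,(I^3,\lambda_0)$. Your extra details (induction on $l$, uniqueness of the $3$-coloring of a prism) just flesh out the prism identity that the paper states without proof.
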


From the above examination we use the operation $\sharp^e$ and $\natural^{-1}$ 
instead of $\natural$ below.
Then we can also use the Dehn surgery $\natural^D$ and its inverse
because of the relations $\natural^D=\sharp^e \Delta^3$ and $(\natural^D)^{-1}=\natural^D$.
Applying Proposition \ref{PN1} to the above proposition, we have,

\begin{proposition}\label{N2}
Each $4$-colored polytope $(P^3, \lambda)$ can be constructed from $\Delta^3$ and
$(I^3, \lambda_0)$ by using three operations $\sharp$, $\sharp^e$ and $\natural^{-1}$.
\end{proposition}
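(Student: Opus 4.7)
The plan is to chain together the two previous propositions and replace the Dehn surgery by the allowed operations. First I would invoke Proposition \ref{PN1}: any $4$-colored polytope $(P^3,\lambda)$ can be built from $3$-colored polytopes together with $\Delta^3$ using only $\sharp$ and $\natural^D$. Then I would invoke Proposition \ref{I2}: every $3$-colored polytope is obtainable from $(I^3,\lambda_0)$ via $\sharp$, $\sharp^e$ and $\natural^{-1}$. Composing these two reductions expresses an arbitrary $4$-colored polytope in terms of $\Delta^3$ and $(I^3,\lambda_0)$ using the operations $\sharp$, $\sharp^e$, $\natural^{-1}$ and $\natural^D$.

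The remaining step is to eliminate $\natural^D$ from this list. For this I would use the identity $\natural^D = \sharp^e \Delta^3$ noted at the beginning of this section, i.e.\ every Dehn surgery along a $0$-sum edge is realized as the connected sum along that edge with $\Delta^3$. Since $\Delta^3$ is one of the two starting blocks and $\sharp^e$ is already in our operation list, each occurrence of $\natural^D$ in the reduction from Proposition \ref{PN1} can be rewritten as an application of $\sharp^e$ with $\Delta^3$. This leaves only the three operations $\sharp$, $\sharp^e$ and $\natural^{-1}$, starting from $\Delta^3$ and $(I^3,\lambda_0)$, as required.

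There is essentially no new obstacle beyond bookkeeping: the substantive combinatorial work has been carried out in Propositions \ref{PN1} and \ref{I2}. The only point that requires brief justification is that the identity $\natural^D=\sharp^e\Delta^3$ is applicable at every step of the reduction in Proposition \ref{PN1}; but since the hypothesis of Proposition \ref{PN1} already guarantees that each Dehn surgery is performed along a $0$-sum edge (i.e.\ a $4$-colored edge in the orientable case), the substitution is legitimate without any additional combinatorial check. Consequently the three operations $\sharp$, $\sharp^e$, $\natural^{-1}$ together with the two building blocks $\Delta^3$ and $(I^3,\lambda_0)$ suffice to construct every $4$-colored $3$-polytope, completing the proof.
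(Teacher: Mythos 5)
Your argument is correct and is essentially the same as the paper's: the paper proves this by combining Proposition~\ref{PN1} with Proposition~\ref{I2} and noting the identity $\natural^D=\sharp^e\Delta^3$ (stated just before Proposition~\ref{I2}) to absorb the Dehn surgery into $\sharp^e$ with $\Delta^3$. You have simply spelled out the one-line composition explicitly, and your remark that the substitution is automatic because $\natural^D$ is by definition only performed along $0$-sum edges is accurate.
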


On the other hand there exist some obstacles for the construction of general
$(\mathbb{Z}_2)^3$-colored polytopes.
At first we must prove that Lemma \ref{L2} also holds for the surgery 
$\natural$.

\begin{lemma}\label{L3}
Let $(P, \lambda)$ be a $(\mathbb{Z}_2)^3$-colored polytope and 
$e$ be an edge of $P$ but not an edge of a quadrilateral.
Suppose that the $3$-connectedness of the $1$-skeleton of $P$
is destroyed after doing surgery $\natural$ along the edge $e$.
Then $(P, \lambda)$ is quasi-decomposable.
\end{lemma}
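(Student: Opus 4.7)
The plan is to mimic the strategy of Lemma \ref{L2} adapted to the forward surgery $\natural$: extract the combinatorial obstruction forced by the 3-connectedness failure, then conclude quasi-decomposability by Proposition \ref{Pro:connD}. First I decode the hypothesis. The rule characterizing when $\natural$ along $e$ is disallowed (stated just before Definition \ref{connedge.bmp}'s subsection) says that 3-connectedness of $\natural P$ fails precisely when the two faces $F_2, F_4$ meeting $e$ at its endpoints (distinct from the pair $F_1, F_3$ sharing $e$) share a further face $F_5 \neq F_1, F_3$, or when $F_1$ or $F_3$ is a quadrilateral. The quadrilateral subcase is ruled out by the hypothesis that $e$ is not an edge of a quadrilateral, so the only remaining possibility is the existence of such an $F_5$. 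In particular $F_1, F_3$ have at least five edges each, and the five faces $F_1, F_2, F_3, F_4, F_5$ are pairwise distinct.

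Next I use this configuration to produce a 3-edge-cut. The existence of $F_5$ yields a prismatic 4-circuit $F_1 - F_2 - F_5 - F_4 - F_1$ in the dual graph of $P$ (and symmetrically a second 4-circuit $F_3 - F_2 - F_5 - F_4 - F_3$ on the other side of $e$), i.e.\ a 4-belt on $\partial P \cong S^2$ whose four crossing edges are $F_1 \cap F_2$, $F_2 \cap F_5$, $F_5 \cap F_4$, $F_4 \cap F_1$. The natural candidate triple is $\{e,\, F_2 \cap F_5,\, F_4 \cap F_5\}$: pairwise non-adjacency follows because (i) $e$ lies on $F_1 \cap F_3$ while the other two lie on $F_5$, and $F_5$ is not adjacent to $F_1, F_3$ in the generic case; and (ii) $F_2 \cap F_5$ and $F_4 \cap F_5$ share a vertex only if $F_2, F_4, F_5$ meet in a common vertex, which forces $F_2$ and $F_4$ to be adjacent in $P$, contradicting the setup. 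Combining this with the two 4-circuits, one checks that cutting these three edges severs the $F_5$-side from the $F_3$-side of the polytope. Proposition \ref{Pro:connD} then gives the conclusion.

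The main obstacle is verifying the separation property in step two, particularly handling the degenerate corner configurations where the naive triple fails to form a cut --- for example, when $F_5$ is itself a small face whose other neighbors lie on the $F_3$-side, or when $F_5$ happens to be adjacent to $F_1$ or $F_3$. In each such case I would either substitute one of the $e_{23}, e_{34}$ edges furnished by the second 4-circuit for $e$ in the cut triple, or else bypass Proposition \ref{Pro:connD} entirely and exhibit a direct $\sharp^e$-decomposition of $P$ along the 4-belt $F_1 - F_2 - F_5 - F_4$ (whose non-degeneracy is guaranteed by the assumption that $F_1, F_3$ are not quadrilaterals, ruling out the forbidden $\sharp^e \Delta^3$ situation). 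A case analysis sorted by the edge-counts of $F_1, F_3, F_5$, in the spirit of the arguments in Propositions \ref{Pro:3-ind} and \ref{Pro:2-ind}, should exhaust all possibilities.
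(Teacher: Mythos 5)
Your setup is right: the hypothesis forces a face $F_5 \ne F_1, F_3$ adjacent to both $F_2$ and $F_4$, and the five faces $F_1,\dots,F_5$ are pairwise distinct. But the proposed candidate cut $\{e,\, F_2\cap F_5,\, F_4\cap F_5\}$ is never a $3$-edge cut, not merely in degenerate cases. In a $3$-connected cubic planar graph a set of three edges disconnects the $1$-skeleton only if it is a bond, which (dually) means the three edges bound a $3$-cycle among three faces $A,B,C$ with cut edges $A\cap B$, $B\cap C$, $C\cap A$. Your three edges lie on the five distinct faces $F_1,F_3$ (for $e$), $F_2,F_5$, and $F_4,F_5$; there is no way to realize this as a dual $3$-cycle, so the triple cannot separate. (For the same reason the substitute triples $\{e_{23},e_{25},e_{45}\}$ and $\{e_{34},e_{25},e_{45}\}$ fail too.) The configuration you label ``degenerate''--- $F_5$ adjacent to $F_1$ or $F_3$---is actually the \emph{easy} case: there one really does get a dual $3$-cycle (e.g.\ $F_1$--$F_2$--$F_5$), and Proposition \ref{Pro:connD} applies directly. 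It is the generic case, where $F_5$ is not adjacent to $F_1$ or $F_3$, that requires work.

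Your fallback---decompose along the $4$-belt $F_1$--$F_2$--$F_5$--$F_4$ via $\sharp^e$---is closer to the paper's route, but it skips the central technical point: coloring. Proposition \ref{Pro:connD} only converts non-colored $3$-cuts into quasi-decompositions; a $4$-belt is \emph{not} covered by it, and a $\sharp^e$-decomposition as a \emph{colored} polytope does not come for free. The paper's proof performs an inverse surgery $\natural^{-1}$ on a pair of edges crossing the belt, choosing which pair according to the value of $\lambda(F)$ (``$i=3$ when $\lambda(F)\in\{\alpha,\alpha+\beta\}$, else $i=1$'') precisely so that the resulting $3$-cut in $\natural^{-1}P$ has linearly independent face colors. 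This gives a genuine colored decomposition $\natural^{-1}P = P_1 \sharp P_2$, hence $P = P_1 \sharp^e P_2$. Without that color-dependent choice one would only conclude that $\natural^{-1}P$ is quasi-decomposable, which does not transfer back to $P$. You also need to cover the case where the auxiliary $\natural^{-1}$ is itself disallowed (the paper invokes Lemma \ref{L2} there), and the case where the ``second'' $4$-circuit degenerates because some of the new vertices coincide ($e_1$ adjacent to $e_2$ in the paper's notation). As written, the proposal has no mechanism for controlling the coloring, and this is the heart of the lemma, so the argument does not go through.
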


\begin{proof}
In the Figure \ref{surgery.bmp} we assume that the surgery $\natural$ 
destroys the $3$-connectedness of the $1$-skeleton of $P$.
Then there exits a face $F$ such that $F \cap F_2 \neq \emptyset$ and 
$F \cap F_4 \neq \emptyset$ (see Figure \ref{pfSurg.bmp}).
Since neither $F_1$ nor $F_3$ is a quadrilateral, $P \neq P^3(3)$ and 
we can assume that $e_1$ is not adjacent to $e_2$ (i.e., $R \neq Q$).
When $e_1'$ is adjacent to $e_4$ (i.e., $R'=Q'$),
the $1$-skeleton of $P$ becomes disconnected after cutting the three 
non-adjacent edges $e_1, e_2, e'$.
Therefore $P$ is decomposable as a non-colored polytope,
and so $P$ is quasi-decomposable from Proposition \ref{Pro:connD}.
We assume that $e_1'$ is not adjacent to $e_4$ (i.e., $P' \neq Q'$).
We do the inverse surgery $\natural^{-1}$ along the pair of edges
$\{ e_i', e_4 \}$ where $i=3$ when $\lambda(F)$ is either 
$\alpha$ or $\alpha + \beta$, and $i=1$ when it is not so.
If the inverse surgery $\natural^{-1}$ is not allowed then 
$(P, \lambda)$ is quasi-decomposable from Lemma \ref{L2}.
Then the graph of $\natural^{-1}P$ becomes disconnected 
after cutting the three non-adjacent edges $e_2, e_i$ and the edge
to which $e_i'$ and $e_4$ were glued by $\natural^{-1}$,
and $\{ \lambda(F), \lambda(F_2), \lambda(F_i) \}$
is linearly independent.
Therefore $\natural^{-1}P$ is decomposable as a $(\mathbb{Z}_2)^3$-colored 
polytope such as $\natural^{-1}P=P_1 \sharp P_2$, or equivalently
$P=P_1 \sharp^e P_2$ i.e. $P$ is quasi-decomposable.
\end{proof}

\begin{figure}[htbp]
\begin{center}
\includegraphics[bb=0 0 333 179,width=7cm]{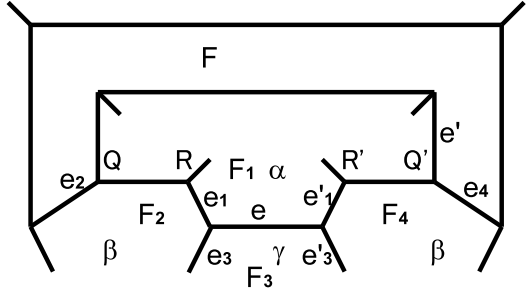}
\end{center}
\caption{The obstacle of the surgery $\natural$.}
\label{pfSurg.bmp}
\end{figure}

\begin{remark}\label{gap}
In \cite{I} Izmestiev used the above lemma only when $F_4$ in Figure
\ref{pfSurg.bmp} is a quadrilateral.
In this case $P$ is always decomposable as a non-colored polytope.
In \cite{LY} L\"{u} and Yu claimed that this argument can be generalized
to every case under the hypothesis of Lemme
\ref{L3} without a proof (cf. \cite[Proposition 2.5]{LY}), 
and proved Theorem \ref{ThLY} using this claim when $F_4$ is a pentagon, too.
However their claim is incorrect (see Figure \ref{counterEX.bmp}).
Although there is a gap in their proof of Theorem \ref{ThLY},
the proof is complemented by using Lemma \ref{L3} 
instead of their key lemma \cite[Proposition 2.5]{LY}.
Furthermore the theorem is improved by replacing $\sharp^\Delta$
with $\sharp^\copyright_3$ as follows: 
Each $(\mathbb{Z}_2)^3$-colored polytope $(P^3, \lambda)$ can be 
constructed from $\Delta^3$ and $(P^3(3), \lambda_2)$ by using seven operations
$\sharp$, $\sharp^e$, $\sharp^{eve}$, $\natural^{-1}$
and $\sharp^\copyright_i$ $(i=3,4,5)$.
\end{remark}

\begin{figure}[htbp]
\begin{center}
\includegraphics[bb=0 0 197 177,width=3cm]{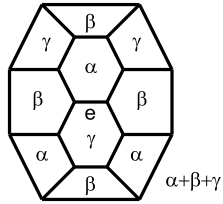}
\end{center}
\caption{A counter example of \cite[Proposition 2.5]{LY}.}
\label{counterEX.bmp}
\end{figure}

From the discussion of the previous section, 
we can reduce each $(\mathbb{Z}_2)^3$-colored polytope $P$
to polytopes which have less faces than $P$
by using the inverses of $\sharp$ and $\sharp^e$ when $P$ has 
a $3$-independent small face, or
a $2$-independent triangle, or
a pair of $2$-independent quadrilaterals adjacent to each other.
Moreover we point out that each $2$-independent pentagon
can be compressed by using the surgery $\natural$ as shown 
in Figure \ref{5-gonR.bmp}.

\begin{figure}[htbp]
\begin{center}
\includegraphics[bb=0 0 576 201,width=10cm]{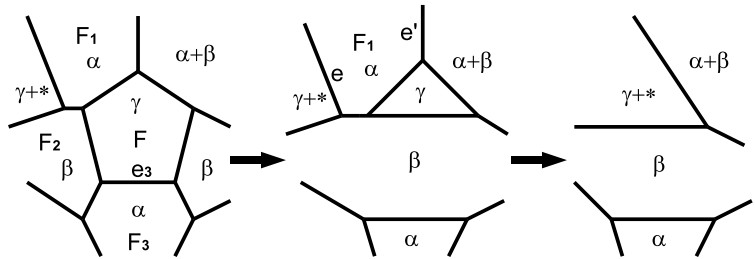}
\end{center}
\caption{Another compression of a $2$-independent pentagon.
In the first diagram we may assume that $F_3$ is not quadrilateral 
by replacing it by $F_2$ if necessary. Then we can do the surgery $\natural$ 
for the edge $e_3$ and transform $F$ into a triangle (second diagram). 
Here when the surgery $\natural$ is not allowed, $P$ is quasi-decomposable
from Lemma \ref{L3}.
Then the triangle can be compressed by $(\sharp^e)^{-1} P^3(3)$ and 
we have $P= \natural^{-1}(P' \sharp^e P^3(3))$ (third diagram).}
\label{5-gonR.bmp}
\end{figure}

In general when colors of two faces on ends of an edge of big faces coincide,
we can do the surgery $\natural$ along this edge and decrease the number
of faces.
Then we can reduce $P$ to $\tilde{P}$ which satisfies the following conditions:
\\
(1) $\tilde{P}$ is not quasi-decomposable,\\
(2) each small face of $\tilde{P}$ is an isolated $2$-independent 
quadrilateral,\\
(3) two colors of faces on ends of every edge which is adjacent to big
faces do not coincide.

\begin{figure}[htbp]
\begin{center}
\includegraphics[bb=0 0 191 217,width=3cm]{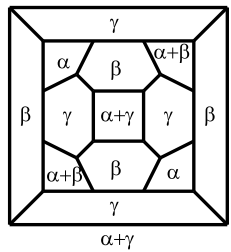}
\end{center}
\caption{Example of an irreducible polytope; truncated octahedron
with a $(\mathbb{Z}_2)^3$-coloring (cf. \cite[Example 2.1]{LY}).}
\label{octahedron.bmp}
\end{figure}

There are many polytopes satisfying the above condition 
(see Figure \ref{octahedron.bmp}).
Obviously such a polytope is irreducible by using the inverses of 
only operations $\sharp$, $\sharp^e$ and $\natural^{-1}$.
Then we need a coloring change operation $\sharp^\copyright_4$ in \cite{LY}.

\begin{definition}[The coloring change $\sharp^\copyright_i$]
The operation in Figure \ref{colorch.bmp} is called the {\it coloring
change} $\sharp^\copyright_i$ for a $2$-independent $i$-gon.
This operation is defined as the connected sum along faces to the
$i$-gonal prism $P^3(i)$ in particular 
$\sharp^\copyright_3= \sharp^\Delta (P^3(3), \lambda_2)$ (see \cite{LY}).
It is clear that $\sharp^\copyright_i$ is invertible because
$(\sharp^\copyright_i)^{-1}=\sharp^\copyright_i$.
\end{definition}

\begin{figure}[htbp]
\begin{center}
\includegraphics[bb=0 0 372 134,width=6cm]{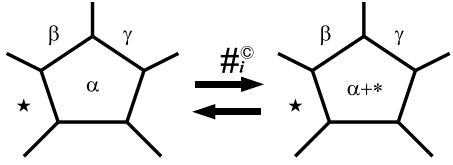}
\end{center}
\caption{The coloring change $\sharp^\copyright_i$ for 
$2$-independent $i$-gon.}
\label{colorch.bmp}
\end{figure}

By using the operation $\sharp^\copyright_4$, 
we can change a color of each $2$-independent 
quadrilateral, and compress it by the surgery $\natural$.
Moreover the $3$-colored cube $(I^3, \lambda_0)$ is obtained by
this operation from other basic polytopes such as 
$\sharp^\copyright_4 (I^3, \lambda_i)$ ($i=1$ or $3$).
Therefore we have an improvement of Theorem \ref{ThLY} as follows.

\begin{theorem}\label{N3}
Each $(\mathbb{Z}_2)^3$-colored polytope $(P^3, \lambda)$
can be constructed from
$\Delta^3$, $(P^3(3), \lambda_1)$ and $(P^3(3), \lambda_2)$ 
by using four operations
$\sharp$, $\sharp^e$, $\natural^{-1}$ and $\sharp^\copyright_4$.
\end{theorem}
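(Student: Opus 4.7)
The plan is to induct on the number of faces of $P$, showing that any $(\mathbb{Z}_2)^3$-colored polytope except the three listed base polytopes decomposes via one of the four forward operations into strictly simpler pieces. In the backward direction the available moves are $\sharp^{-1}$, $(\sharp^e)^{-1}$, the surgery $\natural = (\natural^{-1})^{-1}$, and $\sharp^\copyright_4$ itself; note also that $\natural^D = \sharp^e \Delta^3$ is available as a composition of allowed forward operations, and is self-inverse.

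First I would apply the reduction machinery of Section~4. If $P$ has a $3$-independent small face, Proposition~\ref{Pro:3-ind} says that either $P$ or $\natural^D P$ is quasi-decomposable, so $P$ splits off a piece via $\sharp$ or $\sharp^e$, both allowed. If $P$ has a $2$-independent triangle or a $2$-independent quadrilateral adjacent to another quadrilateral, Proposition~\ref{Pro:2-ind} yields an analogous quasi-decomposition via $\natural^{-1}$. For a $2$-independent pentagon, the extra compression in Figure~\ref{5-gonR.bmp} applies the surgery $\natural$ (an allowed backward move, inverse of $\natural^{-1}$) to convert the pentagon into a triangle, then splits off via $(\sharp^e)^{-1}P^3(3)$; whenever $\natural$ is obstructed, Lemma~\ref{L3} guarantees that $P$ is already quasi-decomposable via $\sharp$ or $\sharp^e$, keeping the decomposition inside our operation set.

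More generally, whenever two faces incident to an edge of a big face share a color, one applies $\natural$ along that edge to strictly decrease the face count, again appealing to Lemma~\ref{L3} when $\natural$ is blocked. Iterating these reductions, either $P$ decomposes into strictly simpler pieces, or it shrinks to a residual polytope $\tilde{P}$ satisfying the three conditions stated just before the theorem: $\tilde{P}$ is not quasi-decomposable, every small face of $\tilde{P}$ is an isolated $2$-independent quadrilateral, and every edge adjacent to a big face has distinct colors at its two ends.

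The main obstacle is precisely this residual case, realised for instance by the truncated octahedron of Figure~\ref{octahedron.bmp}, which is irreducible under $\sharp^{-1}$, $(\sharp^e)^{-1}$, and $\natural^{-1}$ alone. To escape it, I would apply $\sharp^\copyright_4$ to an isolated $2$-independent quadrilateral of $\tilde{P}$; this changes its colouring so that it is no longer an obstruction, and the surgery $\natural$ then compresses it, strictly decreasing the face count. The induction terminates at polytopes of face count at most that of $I^3$. Using Examples~\ref{Ex:prism} and~\ref{Ex:cube}, the identities $(P^3(3),\lambda_3) = \Delta^3 \sharp \Delta^3$, $(I^3,\lambda_i) = (P^3(3),\lambda_i)\sharp^e(P^3(3),\lambda_i)$ for $i=1,2,3$, and $(I^3,\lambda_0) = \sharp^\copyright_4(I^3,\lambda_i)$ for $i=1$ or $3$, reduce every small colored polytope to $\Delta^3$, $(P^3(3),\lambda_1)$, or $(P^3(3),\lambda_2)$, completing the induction.
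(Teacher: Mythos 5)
Your proposal follows essentially the same line of reasoning as the paper: work in the reduction direction with $\sharp^{-1}$, $(\sharp^e)^{-1}$, $\natural$ and the self-inverse $\sharp^\copyright_4$; invoke Propositions~\ref{Pro:3-ind} and~\ref{Pro:2-ind} together with Lemma~\ref{L3} to handle small faces and surgery obstructions; reduce to a residual $\tilde{P}$ with only isolated $2$-independent quadrilaterals; unlock $\tilde{P}$ with $\sharp^\copyright_4$ followed by $\natural$; and close out the induction on $\Delta^3$, $P^3(3)$, $I^3$ using the identities from Examples~\ref{Ex:prism} and~\ref{Ex:cube}. One small slip: when you describe the truncated octahedron as ``irreducible under $\sharp^{-1}$, $(\sharp^e)^{-1}$, and $\natural^{-1}$,'' the last operation should be $\natural$ (the inverse of the allowed forward move $\natural^{-1}$), as you correctly identify at the start of your argument; the rest of the proof uses $\natural$ consistently, so this is only a terminological lapse, not a logical gap.
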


The topological translations of Propositions \ref{I2}, \ref{N2} and
Theorem \ref{N3} are stated in Theorem \ref{ThLast}.

\section{Locally standard $2$-torus manifolds over $D^3$}

In this section we shall give a remark for $2$-torus manifolds.
A {\it $2$-torus manifold} $M^n$ is an $n$-dimensional closed smooth manifold
with an effective action of $(\mathbb{Z}_2)^n$ (see \cite{L}, \cite{LM}
for detail).
If the action is locally standard then the orbit space $Q$ is a nice
manifold with corners.
When $Q$ is a simple convex polytope, $M$ is a small cover.

We consider the case that $Q$ is a $3$-dimensional disc $D^3$ with a simple
cell decomposition of the boundary $\partial D^3$, i.e.
{\it a locally standard $2$-torus manifold over $D^3$}.
This class is a little wider than $3$-dimensional small covers.
In fact the $1$-skeleton of $Q$ is a $2$-connected $3$-valent planner graph.
This graph is simple and $3$-connected if and only if $Q$ is a simple convex
polytope.
In this category there is little obstacle of surgeries.
Therefore it becomes easy to discuss in previous sections.

\begin{example}
In Figure \ref{2-gon.bmp} we show the characteristic functions of
$S^3$ with a standard $(\mathbb{Z}_2)^3$-action and
three different $(\mathbb{Z}_2)^3$-colorings of the $2$-sided prism $P^3(2)$,
respectively.
Then the associated $2$-torus manifolds $M(P^3(2), \lambda_i)$ are 
non-equivariantly homeomorphic to
$S^1 \times S^2$, $S^2$-bundle over $S^1$ characterized by the
conjugation $z \mapsto \bar{z}$ on $S^2=\mathbb{C}P^1$ and
$S^1 \times S^2$ according to $i=0,1,2$.
We denote $M(P^3(2), \lambda_1)$ by $S^1 \times_{\mathbb{Z}_2} S^2$ 
where a $\mathbb{Z}_2$-action on $S^1 \times S^2$ is given as 
follows: $t \cdot (s, z)=(-s, \bar{z})$.
\end{example}

\begin{figure}[htbp]
\begin{center}
\includegraphics[bb=0 0 463 132,width=8cm]{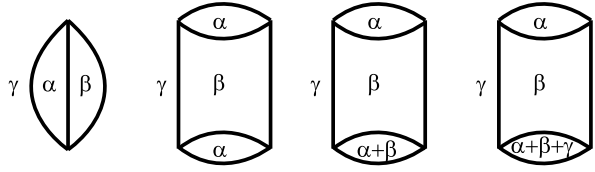}
\end{center}
\caption{The $(\mathbb{Z}_2)^3$-colored simple cell decompositions of $D^3$; 
$\oslash$, $(P^3(2), \lambda_0)$, $(P^3(2), \lambda_1)$ and 
$(P^3(2), \lambda_2)$.}
\label{2-gon.bmp}
\end{figure}

\begin{remark}\label{Rm2}
We can easily verify the following relations:\\
(1) $\sharp \oslash$ is trivial and $\sharp^e \oslash = \natural$.\\
(2) $\sharp P^3(2)$ (or $\sharp^e P^3(2)$ along the horizontal edge) 
is a blow up shown in Figure \ref{bl2.bmp} and
$\sharp^e P^3(2)$ (along the vertical edge) is trivial.\\
(3) $\natural^2 (I^3, \lambda_0)= (P^3(2), \lambda_0)$ and
$\natural (P^3(2), \lambda_0)= \oslash$.\\
(4) $\natural (P^3(3), \lambda_1)= (P^3(2), \lambda_1)$.\\
(5) $\natural^D \Delta^3= (P^3(2), \lambda_2)$.
\end{remark}

\begin{figure}[htbp]
\begin{center}
\includegraphics[bb=0 0 437 96,width=7cm]{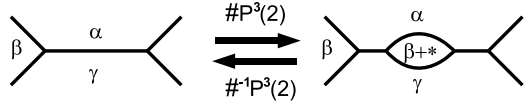}
\end{center}
\caption{The blow up $\sharp P^3(2)$ and its inverse.
In particular $\sharp (P^3(2), \lambda_0)$ (when $*=0$) is the inverse surgery
$\natural^{-1}$ along a pair of adjecent edges.
In \cite{K} the blow down $\sharp^{-1} P^3(2)$ is written by $\natural^0$.}
\label{bl2.bmp}
\end{figure}

We notice that if $Q$ is not $3$-connected then $Q$ is decomposable
as a $(\mathbb{Z}_2)^3$-colored cell decomposition of $D^3$.
Therefore applying the above remark (3), (4) and (5) to Theorem \ref{NN}
we obtain the following corollary immediately.

\begin{corollary}
Each $(\mathbb{Z}_2)^3$-colored cell decomposition of $D^3$ 
can be constructed from $\Delta^3$, 
$(I^3, \lambda_0)$, $(P^3(3), \lambda_1)$ and $(P^3(3), \lambda_2)$ 
by using two operations $\sharp$ and $\natural$.\\
\end{corollary}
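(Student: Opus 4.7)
The plan is to extend the proof strategy of Theorem~\ref{NN} to the wider category of $(\mathbb{Z}_2)^3$-colored cell decompositions of $D^3$, absorbing the few additional base cases via Remark~\ref{Rm2}(3)--(5).

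First I would verify that the four ``degenerate'' (non-polytopal) cell decompositions $\oslash$ and $(P^3(2),\lambda_i)$ for $i=0,1,2$ are each constructible from the four polytope basics $\Delta^3$, $(I^3,\lambda_0)$, $(P^3(3),\lambda_1)$, $(P^3(3),\lambda_2)$ using only $\sharp$ and $\natural$. This is immediate from Remark~\ref{Rm2}:
\[
(P^3(2),\lambda_0)=\natural^2(I^3,\lambda_0), \qquad \oslash=\natural(P^3(2),\lambda_0)=\natural^3(I^3,\lambda_0),
\]
\[
(P^3(2),\lambda_1)=\natural(P^3(3),\lambda_1), \qquad (P^3(2),\lambda_2)=\natural^D(\Delta^3)=\natural(\Delta^3\sharp\Delta^3),
\]
where the last identity uses $\natural^D=\natural\circ\sharp\Delta^3$. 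So all eight ``absolute'' bases (four polytopes plus four degenerate ones) are realized.

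For a general $(\mathbb{Z}_2)^3$-colored cell decomposition $Q$ of $D^3$ I would induct on the number of $2$-cells. If the $1$-skeleton of $Q$ is simple and $3$-connected, then $Q$ is a simple convex polytope and Theorem~\ref{NN} directly yields a construction from the four polytope basics using $\sharp$ and $\natural$. If the $1$-skeleton of $Q$ is not $3$-connected, the observation preceding the corollary tells us that $Q$ decomposes as a colored cell decomposition $Q=Q_1\sharp Q_2$ (possibly along an edge, using $\sharp^e=\natural\circ\sharp$), where both summands have strictly fewer $2$-cells, and the inductive hypothesis applies to each.

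The remaining and subtlest case is when $Q$ is $3$-connected but its $1$-skeleton is not simple, i.e., $Q$ contains a $2$-gon. If $Q$ itself equals one of the $(P^3(2),\lambda_i)$ it is covered by the first step; otherwise I would run the face-compression arguments of Propositions~\ref{Pro:3-ind} and \ref{Pro:2-ind} in this wider category, now admitting $2$-gons among the ``small faces''. The main obstacle is that the inverse surgery $\natural^{-1}$ is not in our allowed operation list; this is bypassed by the compositional identity $\sharp(P^3(2),\lambda_0)=\natural^{-1}$ along a pair of adjacent edges (Remark~\ref{Rm2}(2) and Figure~\ref{bl2.bmp}), so $\natural^{-1}$ is effectively realized as a $\sharp$-connected sum with an already-constructed object. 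Combined with the observation at the start of Section~6 that surgeries have essentially no combinatorial obstacle in this wider category, this closes the induction.
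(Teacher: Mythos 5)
Your proposal follows the paper's proof strategy — combine Theorem~\ref{NN} with Remark~\ref{Rm2}(3)--(5) to supply the degenerate base cases, and observe that non-$3$-connectedness gives a colored decomposition — and is correct in substance. A few points deserve care, however. First, $P^3(2)$ is only $2$-connected (its two vertical edges form a $2$-edge cut), not $3$-connected, so the only $3$-connected non-simple $3$-valent planar cell decomposition is $\oslash$; your ``run the face-compression arguments'' branch is therefore vacuous and the $P^3(2)$'s are handled through the decomposition branch, not the $3$-connected non-simple one. Second, the induction measure ``strictly fewer $2$-cells'' is not quite right: when the small summand is $P^3(2)$, the identity $f(Q_1\sharp^e Q_2)=f(Q_1)+f(Q_2)-4$ gives $f(Q_1)=f(Q)$, so a supplementary argument (or treating such decompositions explicitly, as the paper does for $\sharp^e\Delta^3=\natural^D$ via the notion of quasi-decomposability) is needed; since you do treat all $(P^3(2),\lambda_i)$ as reachable base cases this is ultimately harmless, but it should be said. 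Third, the worry about $\natural^{-1}$ is a red herring: $\natural^{-1}$ appears only in the \emph{reduction} direction, and reversing the reduction uses $\natural$ and $\sharp$, which are exactly the allowed operations — there is no need to simulate $\natural^{-1}$ by $\sharp(P^3(2),\lambda_0)$.
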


In the category of $2$-torus manifolds, there is little obstacle for
surgeries and blow downs.
Therefore we need not consider the case that surgeries are not allowed
(e.g. Lemmas \ref{L2} and \ref{L3}), and obtain the following theorem.

\begin{theorem}
(1) Each $3$-colored cell decomposition of $D^3$ can be
constructed from $\oslash$ by using the inverse surgery $\natural^{-1}$.\\
(2) Each $4$-colored cell decomposition of $D^3$ can be
constructed from $\oslash$ by using the inverse surgery $\natural^{-1}$, 
the Dehn surgery $\natural^D (=\sharp^e \Delta^3)$ and the blow up 
$\sharp \Delta^3$.\\
(3) Each $(\mathbb{Z}_2)^3$-colored cell decomposition of $D^3$ can be
constructed from $\oslash$ by using the inverse surgery $\natural^{-1}$
and connecting $\Delta^3$, $(P^3(2), \lambda_1)$ and
$(P^3(3), \lambda_2)$ by the operations $\sharp$ and $\sharp^e$.
\end{theorem}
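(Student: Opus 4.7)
The plan is to derive all three parts from the preceding Corollary, which lifts Theorem \ref{NN} to the $D^3$ category, together with the identities of Remark \ref{Rm2}, exploiting the fact that surgery has essentially no obstruction in the $D^3$ setting, so Lemmas \ref{L2} and \ref{L3} never block anything.

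For part (1), I observe that in a $3$-coloring the two faces $F_2, F_4$ opposite a given edge are forced by the linear independence condition to carry the same colour, so the surgery $\natural$ along any edge yields another $3$-colored cell decomposition with one fewer face. Induction on face count therefore reduces any $3$-colored cell decomposition of $D^3$ to a minimum, which by Remark \ref{Rm2}(3) is $\oslash$; reversing the reduction gives the construction using only $\natural^{-1}$. For part (2), I run the induction of Proposition \ref{PN1} in the $D^3$ category: every $4$-colored cell decomposition $P \neq \Delta^3$ can, after finitely many Dehn surgeries $\natural^D$ (unobstructed here by Lemma \ref{L2}), be written as $P' \sharp \Delta^3$ with $P'$ a $4$-colored cell decomposition having strictly fewer faces; iterating and invoking part (1) to build the residual $3$-colored pieces from $\oslash$ via $\natural^{-1}$ completes the argument. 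For part (3), I apply the preceding Corollary: Remark \ref{Rm2}(3),(4) gives $(I^3, \lambda_0) = \natural^{-3}\,\oslash$ and $(P^3(3), \lambda_1) = \natural^{-1}(P^3(2), \lambda_1)$, so all four polytope bases of the Corollary lie in the class reachable from $\oslash$ by $\natural^{-1}$ together with $\sharp$-connection with the three listed polytopes. It remains to absorb each $\natural$ of the Corollary's construction; this I handle by re-running the non-decreasing argument of Section 5 inside the $D^3$ category, where the coloring-change $\sharp^\copyright_4$ of Theorem \ref{N3} becomes redundant thanks to the presence of $(P^3(2), \lambda_1)$ as a base and the freedom of surgery along adjacent edges.

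The main obstacle lies in the last step of part (3): removing the reliance on $\sharp^\copyright_4$. The expected mechanism is that the irreducible $2$-independent quadrilateral configurations that Propositions \ref{Pro:3-ind} and \ref{Pro:2-ind} handle by coloring change can now be broken by an insertion $\sharp (P^3(2), \lambda_1)$, which introduces a bigon alongside the problematic quadrilateral and alters the local coloring, followed by a sequence of $\natural^{-1}$ along edges that may now be adjacent since $3$-connectedness is relaxed to $2$-connectedness in the $D^3$ category. I would verify this by re-examining the exceptional cases in those propositions and checking that each no longer requires a true coloring-change operation.
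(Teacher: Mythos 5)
Parts (1) and (2) of your proposal are correct and essentially reproduce the paper's argument: in a $3$-coloring the two faces opposite any edge are forced to carry the same color, so $\natural$ is always applicable in the $D^3$ category and strictly decreases face count; and the Dehn-surgery reduction from Proposition \ref{PN1} runs unobstructed to reduce $4$-colored decompositions to $3$-colored ones plus $\Delta^3$-summands.

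Part (3) has a genuine gap. Your initial strategy --- inherit the construction from the preceding Corollary and then ``absorb each $\natural$'' --- cannot succeed as stated, because the Corollary's construction sequences use the face-decreasing surgery $\natural$, which is not among the permitted operations in (3), and there is no general way to replace an occurrence of $\natural$ in such a sequence by $\natural^{-1}$'s and connected sums with a fixed list of polytopes. You then pivot to re-running the Section 5 non-decreasing argument with $\sharp(P^3(2),\lambda_1)$-insertions playing the role of the coloring change $\sharp^\copyright_4$, but you leave this explicitly unverified (``I would verify this by re-examining the exceptional cases''), and it is precisely the crux of the claim. The mechanism is also misidentified: Propositions \ref{Pro:3-ind} and \ref{Pro:2-ind} do not invoke a coloring change --- $\sharp^\copyright_4$ enters only in Theorem \ref{N3}, whose reason for needing it (isolated $2$-independent quadrilaterals irreducible under $\sharp$, $\sharp^e$, $\natural^{-1}$ alone) simply disappears in the $D^3$ setting, so there is nothing to simulate. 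The paper instead re-runs Propositions \ref{Pro:3-ind} and \ref{Pro:2-ind} directly: no quasi-decomposition cases arise, and every $2$-independent quadrilateral or pentagon has a $3$-colored edge (some pair of opposite adjacent faces share a color), along which $\natural$ is always applicable since only $2$-connectedness of the graph need be preserved; this compresses the face count with no coloring change required. After the reduction the residual basic pieces $\Delta^3$, $P^3(3)$, $P^3(2)$ in their various colorings are consolidated to the three stated generators via Remark \ref{Rm2}(3),(4),(5) and commutativity relations such as $\sharp(P^3(3),\lambda_1)=\natural^{-1}\circ\sharp(P^3(2),\lambda_1)$. Your observation that relaxing $3$-connectedness to $2$-connectedness is the essential new freedom is correct, but it should be applied directly to the $\natural$-compression of $2$-independent small faces, not to a substitute for $\sharp^\copyright_4$.
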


\begin{proof}
Let $(Q, \lambda)$ be a $(\mathbb{Z}_2)^3$-colored cell decomposition of $D^3$
but not $\oslash$.
If a $2$-gonal face appears in the following discussion then 
$(P^3(2),\lambda_1)$ is separated from $Q$ or we do the surgery $\natural$ 
and a $2$-gon is compressed immediately.

(1) Each $3$-colored cell decomposition except $\oslash$ can be done 
the surgery $\natural$ and decrease the number of faces.

(2) In the proof of Proposition \ref{PN1} 
the Dehn surgery $\natural^D$ can be continued until a triangle appears
because there is no obstacle of $\natural^D$.
Therefore each $4$-colored cell decomposition of $D^3$ can be
reduced to a $3$-colored cell decomposition by using $\natural^D$ and
the blow down $\sharp^{-1} \Delta^3$.

(3) In the proofs of Propositions \ref{Pro:3-ind} and \ref{Pro:2-ind}
we need not consider the quasi-decomposition by prohibition of surgeries.
When $Q$ has a $3$-independent small face, $Q$ can be reduced
by the blow downs $\sharp^{-1} \Delta^3$, $\sharp^{-1} P^3(3)$, 
$(\sharp^e)^{-1} P^3(3)$ and the Dehn surgery $\natural^D$.
When $Q$ has a $2$-independent triangle, $Q$ can be reduced by the blow downs
$\sharp^{-1} P^3(3)$ and $(\sharp^e)^{-1} P^3(3)$ (along the horizontal edge).
Since each $2$-independent quadrilateral (or pentagon)
has a $3$-colored edge, we can do the surgery $\natural$ along this edge
in this category and decrease the number of faces.
Therefore we can reduce $Q$ to the basic polytopes $\Delta^3$, $P^3(3)$
and $P^3(2)$ by using $\natural$ and inverses of $\sharp$ and $\sharp^e$.
From the relations (3), (4) and (5) in Remark \ref{Rm2},
$(P^3(2), \lambda_0)$, $(P^3(2), \lambda_2)$ and $(P^3(3), \lambda_i)$
($i=1,3$) can be constructed from others.
Here $\sharp$ (or $\sharp^e$) and $\natural^{-1}$ (or $\natural^D$) 
are commutative in this category such as 
$\sharp (P^3(2), \lambda_2)= \natural^D \circ \sharp \Delta^3$, 
$\sharp (P^3(3), \lambda_1)= \natural^{-1} \circ \sharp (P^3(2), \lambda_1)$
and so on.
Then the proof is complete.
\end{proof}

The topological translation of the above theorem is stated in Theorem
\ref{NN2}.

\vspace{10pt}
\textbf{Acknowledgments.}
Finally the author would like to thank Professor M. Masuda for his advice
and stimulating discussions.


\bibliographystyle{amsplain}

\end{document}